\documentclass[12pt]{amsart}

\usepackage{amsmath,amsthm,amssymb}
\usepackage[utf8]{inputenc}
\usepackage[T1]{fontenc}
\usepackage{hyperref}
\usepackage{enumitem}
\usepackage{mathtools}
\usepackage{booktabs}

\theoremstyle{plain}
\newtheorem{theorem}{Theorem}[section]
\newtheorem{lemma}[theorem]{Lemma}
\newtheorem{proposition}[theorem]{Proposition}
\newtheorem{corollary}[theorem]{Corollary}

\theoremstyle{definition}
\newtheorem{definition}[theorem]{Definition}
\newtheorem{example}[theorem]{Example}
\newtheorem{remark}[theorem]{Remark}
\newtheorem{openproblem}{Open Problem}

\newcommand{\N}{\mathbb{N}}
\newcommand{\R}{\mathbb{R}}

\newcommand{\st}{\operatorname{st}}
\newcommand{\abs}[1]{\left|#1\right|}
\newcommand{\set}[1]{\left\{#1\right\}}
\newcommand{\SUC}{\mathrm{SUC}}

\title[Statistically $p$-Upward Quasi-Cauchy Sequences]{Statistically $p$-Upward Quasi-Cauchy Sequences\\ and Cone-Valued Continuity}

\author{Ahu A\c{c}{\i}kg\"{o}z}
\address{Department of Mathematics, Balikesir University, \c{C}a\u{g}{\i}\c{s} Campus, 10145 Balikesir, Turkey}
\email{ahuacikgoz@balikesir.edu.tr}

\date{\today}

\subjclass[2020]{Primary 40A05; Secondary 40A35, 40G15, 26A15}

\keywords{Statistical convergence, quasi-Cauchy sequences, upward compactness, sequential continuity, convex cone, one-sided approximation}

\begin{document}

\begin{abstract}
We introduce statistically $p$-upward quasi-Cauchy sequences, defined by the condition $\lim_{n\to\infty}\frac{1}{n}|\{k\leq n: x_k - x_{k+p}\geq\varepsilon\}|=0$ for every $\varepsilon>0$, and develop the corresponding notions of compactness and continuity. We prove that a subset of $\R$ is statistically $p$-upward compact if and only if it is bounded below, characterizing lower boundedness sequentially. Statistically $p$-upward continuity is shown to imply uniform continuity on below bounded sets. The function space $\SUC_p(E)$ is a closed convex cone that fails to be a vector subspace---distinguishing it from all previously studied sequential continuity spaces. We establish that every non-decreasing uniformly continuous function belongs to $\SUC_p(E)$, use Weyl's equidistribution theorem to show $\sin x\notin\SUC_p(\R)$, prove a step-parameter hierarchy, and show that $\SUC_p(E)\cap C_b(E)$ is nowhere dense in $C_b(E)$. As an application, we develop a one-sided error control theory for function approximation, illustrated by Bernstein operators on a pharmacokinetic model. The inclusion relations among the continuity types studied and open problems are provided.
\end{abstract}

\maketitle

\section{Introduction}\label{sec:intro}

\subsection{Background and motivation}

The concept of statistical convergence, independently introduced by Fast~\cite{Fast1951} and Steinhaus~\cite{Steinhaus1951}, and later developed systematically by Fridy~\cite{Fridy1985} and \v{S}al\'{a}t~\cite{Salat1980}, has become a fundamental tool in summability theory and its applications. A sequence $(x_k)$ of real numbers is \emph{statistically convergent} to $\ell\in\R$ if for each $\varepsilon>0$,
\[
\lim_{n\to\infty}\frac{1}{n}\abs{\set{k\leq n: \abs{x_k-\ell}\geq\varepsilon}}=0,
\]
denoted by $\st\text{-}\lim_{k\to\infty}x_k=\ell$. This notion extends classical convergence and has attracted considerable attention in analysis, topology, and measure theory (see Di~Maio and Ko\v{c}inac~\cite{DiMaio2008}, Connor~\cite{Connor1992}, Maddox~\cite{Maddox1988}, Caserta et al.~\cite{Caserta2011,Caserta2012}). The power of statistical convergence lies in its ability to handle sequences that fail to converge in the classical sense but exhibit convergence behavior on ``almost all'' indices in the sense of natural density.

Continuity and its sequential characterizations constitute a central theme in real analysis with deep connections to diverse applied disciplines. Utilizing the interplay between sequences and continuity, various authors have introduced and studied new continuity types through sequential definitions, including slowly oscillating continuity~\cite{Vallin2011}, quasi-slowly oscillating continuity~\cite{Canak2010}, statistical ward continuity~\cite{Cakalli2011SWC}, lacunary statistical ward continuity~\cite{Yildiz2019}, $G$-sequential continuity~\cite{Mucuk2014,Connor2003}, and $I$-sequential continuity~\cite{Lahiri2005}. These concepts have been instrumental in characterizing uniform continuity through sequential conditions and have led to important applications in approximation theory (see also~\cite{Mohiuddine2012} for statistical summability and Korovkin-type results).

In~\cite{Burton2010}, Burton and Coleman studied quasi-Cauchy sequences, i.e., sequences $(x_n)$ for which $(x_n-x_{n+1})$ is a null sequence. The $p$-quasi-Cauchy generalization, where one considers $\Delta_p x_n = x_n - x_{n+p}$, was studied in~\cite{Cakalli2015var} and extended to double sequences and metric spaces in~\cite{Patterson2015}. The notion of upward half Cauchyness was introduced by Palladino~\cite{Palladino2012}, who studied sequences satisfying $x_n-x_m<\varepsilon$ for $m\geq n$ sufficiently large. This asymmetric condition, obtained by removing the absolute value from the classical Cauchy condition, captures a directional decay property that is naturally connected with monotonicity and lower boundedness. The concept of statistically quasi-Cauchy sequences was investigated in~\cite{Cakalli2011SQC}, and variations with statistical density were further explored in~\cite{Cakalli2019var}.

\subsection{Applied motivation and significance}\label{subsec:applied}

The asymmetric, one-sided nature of the concepts introduced in this paper is motivated by settings where directional behavior plays a more significant role than symmetric deviations. In approximation theory, when $f$ represents a safety-critical quantity---such as drug concentration in pharmacokinetics or load-bearing capacity in structural engineering---an underestimate $L_n(f;t)<f(t)-\varepsilon$ can be dangerous, while an overestimate may be tolerable. The convex cone structure of $\SUC_p(E)$, combined with the nowhere density result showing that generic bounded continuous functions do \emph{not} belong to $\SUC_p(E)$, provides a natural algebraic framework for analyzing this asymmetry. In Section~\ref{subsec:onesided}, we formalize a notion of ``statistically $p$-upward safe'' approximation and demonstrate it with the classical Bernstein operators on a pharmacokinetic concentration curve.

More broadly, the one-sided framework is relevant to financial risk analysis (where downside risk is fundamentally different from upside potential), signal processing (where one-sided drift detection is essential for quality control), and population dynamics (where population sizes are inherently non-negative). The compactness characterization (Theorem~\ref{thm:compact}) and the uniform continuity result (Theorem~\ref{thm:uniform}) have natural interpretations in each of these contexts.

\subsection{Main contributions}

The present paper extends the above lines of investigation by introducing statistically $p$-upward quasi-Cauchy sequences, which combine the $p$-step gap structure with the directional (non-absolute-value) condition under the framework of statistical density. The removal of the absolute value is not merely a cosmetic change; it fundamentally alters the compactness characterization from boundedness to lower boundedness, and the resulting continuity notion imposes a directional regularity on functions that is strictly stronger than ordinary continuity yet weaker than Lipschitz continuity.

Our main contributions are as follows:
\begin{itemize}[leftmargin=2em]
\item We characterize statistically $p$-upward compactness: a subset of $\R$ is statistically $p$-upward compact if and only if it is bounded below (Theorem~\ref{thm:compact}).
\item We show that statistically $p$-upward continuity implies statistical ward continuity and statistical continuity (Theorems~\ref{thm:ward} and~\ref{thm:statcont}).
\item We prove that every statistically $p$-upward continuous function on a below bounded set is uniformly continuous (Theorem~\ref{thm:uniform}), and that the set of such functions is closed in the uniform topology (Theorem~\ref{thm:closed}).
\item We prove that $\SUC_p(E)$ is a convex cone that is not a vector subspace (Theorem~\ref{thm:cone}), distinguishing it structurally from all classical sequential continuity function spaces.
\item We establish that every non-decreasing uniformly continuous function belongs to $\SUC_p(E)$ (Theorem~\ref{thm:monotone}), and use Weyl's equidistribution theorem to show that $\sin x\notin\SUC_p(\R)$ for any $p$ (Theorem~\ref{thm:sine}).
\item We prove the step-parameter hierarchy $\SUC_p(E)\subseteq\SUC_1(E)$ (Theorem~\ref{thm:p_hierarchy}) and show that the sequence classes $\Delta_S^{+p}$ are incomparable when neither $p$ nor $q$ divides the other (Proposition~\ref{prop:seq_incomparable}).
\item We show that $\SUC_p(E)\cap C_b(E)$ is nowhere dense in $(C_b(E),\|\cdot\|_\infty)$ when $E$ is unbounded above (Theorem~\ref{thm:nowhere_dense}), establishing that the generic bounded continuous function is not statistically $p$-upward continuous.
\item We develop a theory of \emph{one-sided error control} for function approximation, showing that the cone structure of $\SUC_p$ governs the composability of safe approximation schemes (Proposition~\ref{prop:safe_compose}). A numerical illustration using Bernstein operators on a pharmacokinetic model demonstrates the practical relevance of the framework (Example~\ref{ex:pharma}).
\item We present the inclusion relations among all continuity types (Section~\ref{sec:diagram}), develop a one-sided error control framework with a pharmacokinetic illustration (Section~\ref{sec:applications}), and formulate open problems (Section~\ref{sec:open}).
\end{itemize}

\section{Preliminaries}\label{sec:prelim}

Throughout this paper, $\N$ and $\R$ denote the set of all positive integers and the set of all real numbers, respectively. We shall use $p$ to denote a fixed positive integer unless otherwise stated. Recall that the \emph{natural density} of a set $A\subseteq\N$ is defined as $\delta(A)=\lim_{n\to\infty}\frac{1}{n}|A\cap\{1,2,\ldots,n\}|$, provided the limit exists (see~\cite{Freedman1978}).

\begin{definition}[\cite{Fast1951,Fridy1985}]\label{def:statconv}
A sequence $(x_k)$ in $\R$ is called \emph{statistically convergent} to $\ell\in\R$ if $\delta(\{k\in\N:\abs{x_k-\ell}\geq\varepsilon\})=0$ for each $\varepsilon>0$. We write $\st\text{-}\lim_{k\to\infty}x_k=\ell$. The set of all statistically convergent sequences is denoted by~$S$.
\end{definition}

\begin{definition}[\cite{Burton2010}]\label{def:quasiCauchy}
A sequence $(x_n)$ in $\R$ is called \emph{quasi-Cauchy} if $\lim_{n\to\infty}(x_n-x_{n+1})=0$. More generally, $(x_n)$ is called \emph{$p$-quasi-Cauchy} if $\lim_{n\to\infty}(x_n-x_{n+p})=0$.
\end{definition}

\begin{definition}[\cite{Cakalli2011SQC}]\label{def:statquasiCauchy}
A sequence $(x_n)$ in $\R$ is called \emph{statistically quasi-Cauchy} if $\st\text{-}\lim_{n\to\infty}(x_n-x_{n+1})=0$, that is, $\delta(\{k\in\N:\abs{x_k-x_{k+1}}\geq\varepsilon\})=0$ for every $\varepsilon>0$.
\end{definition}

\begin{definition}[\cite{Palladino2012}]\label{def:uphalfCauchy}
A sequence $(x_n)$ in $\R$ is called \emph{upward half Cauchy} if for every $\varepsilon>0$ there exists $n_0\in\N$ such that $x_n-x_m<\varepsilon$ for all $m\geq n\geq n_0$.
\end{definition}

The classical Bernstein operators on $C[0,1]$ are defined by
\begin{equation}\label{eq:bernstein}
B_n(f;x) = \sum_{k=0}^{n}\binom{n}{k}x^k(1-x)^{n-k}f\!\left(\frac{k}{n}\right), \quad x\in[0,1],\ n\in\N.
\end{equation}

The following auxiliary results will be used in the sequel.

\begin{lemma}[{\cite{Fridy1985}}]\label{lem:fridy}
Every statistically convergent sequence has a convergent subsequence.
\end{lemma}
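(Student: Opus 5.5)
The plan is to build the subsequence directly from the density-zero exceptional sets, proceeding by nested density-one sets and a diagonal argument. Let $\st\text{-}\lim_{k\to\infty}x_k=\ell$. For each $j\in\N$ I would define the exceptional set $A_j=\set{k\in\N:\abs{x_k-\ell}\geq 1/j}$. By Definition~\ref{def:statconv}, $\delta(A_j)=0$. Consequently the complement $K_j=\N\setminus A_j$ has density $1$, and in particular $K_j$ is infinite. The sets $K_j$ are decreasing in $j$, since $\abs{x_k-\ell}<1/(j+1)$ implies $\abs{x_k-\ell}<1/j$.

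Next I would choose indices $k_1<k_2<\cdots$ inductively with $k_j\in K_j$. This is always possible: $K_j$ is infinite, so once $k_{j-1}$ has been chosen, $K_j$ still contains some element larger than $k_{j-1}$. The chosen indices then satisfy $\abs{x_{k_j}-\ell}<1/j$ for every $j$. Hence $(x_{k_j})$ is a subsequence of $(x_k)$ converging to $\ell$ in the ordinary sense.

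The only step needing care is the claim that a set of density one is infinite. If $K_j$ were finite, then $\frac{1}{n}\abs{K_j\cap\{1,\dots,n\}}\leq \abs{K_j}/n\to 0$, which contradicts $\delta(K_j)=1$. This gives the claim. Here $\delta(K_j)=1$ follows from $\delta(A_j)=0$ together with the identity $\abs{K_j\cap[1,n]}=n-\abs{A_j\cap[1,n]}$. I do not expect any real obstacle beyond this observation. The subsequence produced even converges to the statistical limit $\ell$ itself, which is slightly more than the statement of Lemma~\ref{lem:fridy} requires.
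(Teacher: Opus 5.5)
Your proof is correct: each $K_j$ has density one, hence is infinite, and choosing $k_j\in K_j$ with $k_j>k_{j-1}$ gives $\abs{x_{k_j}-\ell}<1/j$.

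The paper gives no proof of Lemma~\ref{lem:fridy}; it only cites Fridy. In that literature (\v{S}al\'{a}t, Fridy) the result is usually read off from a stronger characterization: $\st\text{-}\lim_{k\to\infty}x_k=\ell$ holds if and only if $x_k\to\ell$ along some index set $K$ with $\delta(K)=1$. Proving that characterization needs a more careful gluing step. One chooses thresholds $N_1<N_2<\cdots$ with $\frac{1}{n}\abs{K_j\cap[1,n]}>1-\frac{1}{j}$ for $n>N_j$, and sets $K=\bigcup_j\bigl(K_j\cap(N_j,N_{j+1}]\bigr)$. The nestedness $K_1\supseteq K_2\supseteq\cdots$ that you observed, but never use, is exactly what guarantees that this glued set still has density one.

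Your diagonal choice of one index from each $K_j$ is shorter and fully sufficient for the lemma as stated. The classical route additionally gives a convergent subsequence taken along a density-one set of indices. That stronger form is the one usually needed in applications, e.g.\ to split a statistically convergent sequence into a convergent sequence plus one supported on a set of density zero.
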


\begin{lemma}[{\cite{Burton2010}}]\label{lem:burton}
A subset $E$ of\/ $\R$ is bounded if and only if every sequence in $E$ has a quasi-Cauchy subsequence.
\end{lemma}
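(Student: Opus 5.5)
The statement to prove is Lemma~\ref{lem:burton}: a set $E\subseteq\R$ is bounded if and only if every sequence in $E$ has a quasi-Cauchy subsequence. We prove the two directions separately.

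\emph{Bounded implies the subsequence property.} Let $E$ be bounded and $(x_n)$ a sequence in $E$. By the Bolzano--Weierstrass theorem, $(x_n)$ has a convergent subsequence $(x_{n_k})$. A convergent sequence is Cauchy, so in particular $x_{n_k}-x_{n_{k+1}}\to 0$. Hence $(x_{n_k})$ is quasi-Cauchy.

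\emph{Unbounded implies failure of the property.} We argue by contrapositive: if $E$ is unbounded, we build a sequence in $E$ none of whose subsequences is quasi-Cauchy. Suppose, say, that $E$ is unbounded above; the case of unbounded below is symmetric. Choose $x_1\in E$, and inductively choose $x_{n+1}\in E$ with $x_{n+1}>x_n+1$. Now take any subsequence $(x_{n_k})$. Since $n_{k+1}\ge n_k+1$ and the sequence is increasing with gaps larger than $1$, we get
\[
x_{n_{k+1}}-x_{n_k}\ge x_{n_k+1}-x_{n_k}>1
\]
for every $k$. So $|x_{n_k}-x_{n_{k+1}}|>1$ for all $k$, and the subsequence is not quasi-Cauchy.

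The only step needing care is this second direction. We must choose the gaps to be uniformly large, here larger than $1$, so that the bound survives passing to an arbitrary subsequence. Monotonicity of the construction is what makes the gap of any subsequence at least the gap between consecutive original terms.
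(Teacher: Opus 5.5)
Your proof is correct and complete: in the bounded case Bolzano--Weierstrass gives a convergent, hence quasi-Cauchy, subsequence. In the unbounded case the strictly monotone sequence with consecutive gaps exceeding $1$ defeats every subsequence, since $n_{k+1}\geq n_k+1$ forces $x_{n_{k+1}}-x_{n_k}\geq x_{n_k+1}-x_{n_k}>1$. The paper gives no proof of this lemma and simply quotes it from Burton and Coleman, but your argument is the standard one and uses the same monotonicity-under-subsequences device the paper employs in the necessity part of Theorem~\ref{thm:compact}.
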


\section{Statistically $p$-Upward Quasi-Cauchy Sequences}\label{sec:main1}

We begin by introducing our central definition, which is obtained by replacing $\abs{x_k-x_{k+p}}\geq\varepsilon$ with the one-sided condition $x_k-x_{k+p}\geq\varepsilon$ in the definition of a $p$-statistically quasi-Cauchy sequence.

\begin{definition}\label{def:stupquasiCauchy}
A sequence $(x_n)$ of points in $\R$ is called \emph{statistically $p$-upward quasi-Cauchy} if
\[
\lim_{n\to\infty}\frac{1}{n}\abs{\set{k\leq n: x_k-x_{k+p}\geq\varepsilon}}=0
\]
for every $\varepsilon>0$.
\end{definition}

We denote the set of all statistically $p$-upward quasi-Cauchy sequences by $\Delta_{S}^{+p}$.

\begin{remark}\label{rem:basic}
The following relations are immediate from the definition:
\begin{enumerate}[label=(\roman*),leftmargin=2.5em]
\item Every statistically convergent sequence is statistically $p$-upward quasi-Cauchy.
\item Every statistically quasi-Cauchy sequence is statistically $p$-upward quasi-Cauchy.
\item Every $p$-upward quasi-Cauchy sequence (in the ordinary sense) is statistically $p$-upward quasi-Cauchy.
\item Every slowly oscillating sequence is statistically $p$-upward quasi-Cauchy, and hence every Cauchy sequence and every convergent sequence are statistically $p$-upward quasi-Cauchy.
\end{enumerate}
\end{remark}

\begin{proposition}\label{prop:upwardimpliesp}
Every statistically upward quasi-Cauchy sequence is statistically $p$-upward quasi-Cauchy for every $p\in\N$.
\end{proposition}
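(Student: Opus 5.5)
Here ``statistically upward quasi-Cauchy'' is read as the case $p=1$ of Definition~\ref{def:stupquasiCauchy}, i.e.\ $\delta(\{k: x_k-x_{k+1}\geq\varepsilon\})=0$ for every $\varepsilon>0$. The plan is to telescope the $p$-step difference into one-step differences and to control the exceptional set by a union of shifted copies of the one-step exceptional set.

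Fix $\varepsilon>0$ and $p\in\N$, and write
\[
x_k-x_{k+p}=\sum_{j=0}^{p-1}\bigl(x_{k+j}-x_{k+j+1}\bigr).
\]
If every summand satisfies $x_{k+j}-x_{k+j+1}<\varepsilon/p$, then $x_k-x_{k+p}<\varepsilon$. Only an upper bound on each summand is needed here, which is exactly what the one-sided hypothesis provides; the summands may be arbitrarily negative. Setting $A=\{m\in\N: x_m-x_{m+1}\geq\varepsilon/p\}$, we get the inclusion
\[
\{k\in\N: x_k-x_{k+p}\geq\varepsilon\}\subseteq\bigcup_{j=0}^{p-1}(A-j),
\qquad A-j=\{m-j: m\in A,\ m>j\}.
\]

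Next we count. For each $j$,
\[
\abs{(A-j)\cap\{1,\dots,n\}}\leq\abs{A\cap\{1,\dots,n+p\}}.
\]
Hence
\[
\frac1n\abs{\{k\leq n: x_k-x_{k+p}\geq\varepsilon\}}\leq \frac{p}{n}\abs{A\cap\{1,\dots,n+p\}} = p\cdot\frac{n+p}{n}\cdot\frac{\abs{A\cap\{1,\dots,n+p\}}}{n+p}.
\]
Since $\delta(A)=0$ by hypothesis and $(n+p)/n\to1$, the right-hand side tends to $0$. This gives the claim.

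The only step needing care is the index-shift bookkeeping, namely checking that the translates $A-j$ still have density zero, which the displayed estimate handles. No step is a genuine obstacle: the key observation is simply that the one-sided condition survives telescoping, because a sum of terms each bounded above by $\varepsilon/p$ is bounded above by $\varepsilon$.
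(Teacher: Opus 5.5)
Your proof is correct and follows essentially the same route as the paper. Both arguments telescope $x_k-x_{k+p}$ into one-step differences and include the exceptional set in a union of $p$ shifted copies of the one-step exceptional set at threshold $\varepsilon/p$. Your explicit count, showing that each translate $A-j$ still has density zero, supplies a step the paper only asserts.
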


\begin{proof}
Let $(x_n)$ be statistically upward quasi-Cauchy and let $\varepsilon>0$. Using the telescoping identity
\[
x_n - x_{n+p} = \sum_{j=0}^{p-1}(x_{n+j}-x_{n+j+1}),
\]
we see that
\[
\set{k\leq n: x_k-x_{k+p}\geq\varepsilon}\subseteq \bigcup_{j=0}^{p-1}\set{k\leq n: x_{k+j}-x_{k+j+1}\geq\frac{\varepsilon}{p}}.
\]
Since $(x_n)$ is statistically upward quasi-Cauchy, each set on the right has natural density zero. Hence $\delta(\{k\leq n: x_k-x_{k+p}\geq\varepsilon\})=0$.
\end{proof}

The converse of Proposition~\ref{prop:upwardimpliesp} fails in general.

\begin{example}\label{ex:converse1}
The sequence $x_n=(-1)^n$ is statistically $2$-upward quasi-Cauchy (since $x_n-x_{n+2}=0$ for all $n$), but it is not statistically upward quasi-Cauchy, because $x_n-x_{n+1}=2$ for all odd $n$.
\end{example}

\begin{example}\label{ex:converse2}
The sequence $x_n = n+p$ is statistically $p$-upward quasi-Cauchy (since $x_n - x_{n+p} = (n+p)-(n+2p) = -p < 0 < \varepsilon$ for all $n$ and every $\varepsilon>0$), but it is not statistically quasi-Cauchy because $\abs{x_n-x_{n+1}}=1$ for all $n$. From the viewpoint of financial time series, this example models a steadily growing asset price: the $p$-step upward condition is satisfied because the price never drops significantly over any $p$-period window, even though it is not quasi-Cauchy.
\end{example}

\begin{proposition}\label{prop:sum}
The sum of two statistically $p$-upward quasi-Cauchy sequences is statistically $p$-upward quasi-Cauchy.
\end{proposition}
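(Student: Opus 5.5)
The argument is a direct union-bound on the exceptional index sets. Let $(x_n),(y_n)\in\Delta_S^{+p}$, put $z_n=x_n+y_n$, and fix $\varepsilon>0$. Since
\[
z_k-z_{k+p}=(x_k-x_{k+p})+(y_k-y_{k+p}),
\]
whenever $z_k-z_{k+p}\geq\varepsilon$ at least one of the two summands must be $\geq\varepsilon/2$. Otherwise both are $<\varepsilon/2$ and their sum is $<\varepsilon$. This gives, for every $n$, the inclusion
\[
\set{k\leq n: z_k-z_{k+p}\geq\varepsilon}\subseteq\set{k\leq n: x_k-x_{k+p}\geq\tfrac{\varepsilon}{2}}\cup\set{k\leq n: y_k-y_{k+p}\geq\tfrac{\varepsilon}{2}}.
\]

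Next, take cardinalities, divide by $n$, and use subadditivity of counting measure:
\[
\frac1n\abs{\set{k\leq n: z_k-z_{k+p}\geq\varepsilon}}\leq\frac1n\abs{\set{k\leq n: x_k-x_{k+p}\geq\tfrac{\varepsilon}{2}}}+\frac1n\abs{\set{k\leq n: y_k-y_{k+p}\geq\tfrac{\varepsilon}{2}}}.
\]
By Definition~\ref{def:stupquasiCauchy} applied with $\varepsilon/2$ to $(x_n)$ and to $(y_n)$, both terms on the right tend to $0$ as $n\to\infty$. The left side is nonnegative, so it tends to $0$ as well. Since $\varepsilon>0$ was arbitrary, $(z_n)\in\Delta_S^{+p}$.

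No step poses a real obstacle. The one point needing care is that the splitting uses the one-sided inequality $a+b\geq\varepsilon\Rightarrow a\geq\varepsilon/2$ or $b\geq\varepsilon/2$, which holds without absolute values, so the asymmetry of the definition causes no trouble. The same argument does not give closure under negation: $x_k-x_{k+p}$ and $-(x_k-x_{k+p})$ play asymmetric roles, which is consistent with the cone structure the paper emphasizes later.
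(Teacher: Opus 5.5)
Your proof is correct and is the paper's argument: the same inclusion of the exceptional set for $(x_n+y_n)$ into the union of the two $\varepsilon/2$-exceptional sets, each of which has density zero. The only difference is that you write out the counting step (subadditivity after dividing by $n$), which the paper leaves implicit.
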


\begin{proof}
Let $(x_n)$ and $(y_n)$ be statistically $p$-upward quasi-Cauchy and let $\varepsilon>0$. Then
\[
\set{k\leq n:(x_k+y_k)-(x_{k+p}+y_{k+p})\geq\varepsilon}\subseteq\set{k\leq n:x_k-x_{k+p}\geq\frac{\varepsilon}{2}}\cup\set{k\leq n:y_k-y_{k+p}\geq\frac{\varepsilon}{2}}.
\]
Since both sets on the right have density zero, so does the set on the left.
\end{proof}

\section{Statistically $p$-Upward Compactness}\label{sec:compact}

\begin{definition}\label{def:stupcompact}
A subset $E$ of $\R$ is called \emph{statistically $p$-upward compact} if every sequence of points in $E$ has a statistically $p$-upward quasi-Cauchy subsequence.
\end{definition}

\begin{remark}\label{rem:compact}
Since lower boundedness is preserved by taking subsets, unions, and finite sets, it follows immediately from Theorem~\ref{thm:compact} that every finite, bounded, or compact subset of $\R$ is statistically $p$-upward compact, and that the class of statistically $p$-upward compact sets is closed under finite unions and taking subsets.
\end{remark}

The next theorem provides a complete characterization of statistically $p$-upward compactness.

\begin{theorem}\label{thm:compact}
A subset $E$ of\/ $\R$ is statistically $p$-upward compact if and only if $E$ is bounded below.
\end{theorem}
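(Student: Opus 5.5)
The proof has two directions; in each, the subsequence will be chosen to be monotone (or close to it), so that the one-sided differences $x_k-x_{k+p}$ are controlled directly.

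\emph{Sufficiency.} Assume $E$ is bounded below and let $(x_n)$ be a sequence in $E$. We distinguish two cases.
\begin{enumerate}[label=(\alph*),leftmargin=2.5em]
\item If $(x_n)$ is unbounded above, choose inductively $n_1<n_2<\cdots$ with $x_{n_{k+1}}>x_{n_k}$. This is possible because unboundedness above gives, for each $k$, infinitely many indices $n$ with $x_n>x_{n_k}$. The subsequence $y_k=x_{n_k}$ is strictly increasing, so $y_k-y_{k+p}<0<\varepsilon$ for every $k$ and every $\varepsilon>0$. The exceptional set in Definition~\ref{def:stupquasiCauchy} is therefore empty.
\item If $(x_n)$ is bounded above, it is bounded, since $E$ is bounded below. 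By Bolzano--Weierstrass it has a convergent subsequence, and by Remark~\ref{rem:basic}(iv) this subsequence is statistically $p$-upward quasi-Cauchy. Alternatively, one can extract a monotone subsequence, which converges, so that $y_k-y_{k+p}\to0$ and the exceptional set is finite.
\end{enumerate}
In either case we obtain a subsequence in $\Delta_S^{+p}$.

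\emph{Necessity.} Suppose $E$ is not bounded below. Pick $x_n\in E$ with $x_{n+1}<x_n-n$. Every choice of $n_1<n_2<\cdots$ then gives a subsequence $y_k=x_{n_k}$ with
\[
y_k-y_{k+p}\ \ge\ y_k-y_{k+1}\ \ge\ n_k\ \ge\ 1 .
\]
The first inequality holds because $(y_k)$ is strictly decreasing. The second holds because $y_{k+1}=x_{n_{k+1}}\le x_{n_k+1}<x_{n_k}-n_k$.

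Take $\varepsilon=1$. Then $\{k\le n: y_k-y_{k+p}\ge 1\}=\{1,\dots,n\}$, which has density $1$, not $0$. Hence no subsequence of $(x_n)$ is statistically $p$-upward quasi-Cauchy, and $E$ is not statistically $p$-upward compact. It would suffice to have $x_{n+1}<x_n-1$, but the faster decrease makes the estimate immediate.

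\emph{Main obstacle.} There is no real obstacle. The only care needed is in the necessity direction, to make sure that \emph{every} subsequence fails and not merely the original sequence. This is handled by the strict decrease with gaps of at least $1$, which all subsequences inherit. In the sufficiency direction, the point is to split into the unbounded-above and bounded cases, rather than to attempt a single monotone-subsequence argument.
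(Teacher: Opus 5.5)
Your proof is correct and follows essentially the paper's argument: an increasing subsequence when $(x_n)$ is unbounded above, a convergent subsequence of a bounded sequence otherwise, and for necessity a rapidly decreasing sequence whose strict monotonicity passes to every subsequence. The differences are cosmetic: you use Bolzano--Weierstrass where the paper uses the Burton--Coleman lemma, and the one-step gap $y_k-y_{k+1}>n_k$ where the paper uses a $p$-step telescoping bound. Your closing aside is unnecessarily cautious, since a single monotone-subsequence argument also works: a non-increasing subsequence in $E$ is bounded below, hence convergent.
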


\begin{proof}
\textbf{Sufficiency.} Suppose that $E$ is bounded below. We consider two cases.

\emph{Case~1:} If $E$ is also bounded above, then $E$ is bounded. By Lemma~\ref{lem:burton}, every sequence in $E$ has a quasi-Cauchy subsequence. Since every quasi-Cauchy sequence is statistically $p$-upward quasi-Cauchy (Remark~\ref{rem:basic}(ii)), $E$ is statistically $p$-upward compact.

\emph{Case~2:} Suppose $E$ is unbounded above. Let $(x_n)$ be any sequence of points in $E$. If $(x_n)$ is bounded above, then since $E$ is bounded below, $(x_n)$ is bounded and the argument of Case~1 applies. If $(x_n)$ is unbounded above, we construct a subsequence as follows. Choose $n_1\in\N$ such that $x_{n_1}>0$. Inductively, having chosen $n_k$, pick $n_{k+1}>n_k+p$ such that $x_{n_{k+1}}>k+x_{n_k}$. Then for each $k\in\N$,
\[
x_{n_k}-x_{n_{k+p}}\leq x_{n_k}-x_{n_{k+1}}<-k.
\]
Hence $x_{n_k}-x_{n_{k+p}}<0<\varepsilon$ for all $k\in\N$ and every $\varepsilon>0$, which shows that
\[
\lim_{n\to\infty}\frac{1}{n}\abs{\set{k\leq n: x_{n_k}-x_{n_{k+p}}\geq\varepsilon}}=0.
\]
Thus $(x_{n_k})$ is a statistically $p$-upward quasi-Cauchy subsequence.

\textbf{Necessity.} Suppose that $E$ is not bounded below. We construct a sequence in $E$ with no statistically $p$-upward quasi-Cauchy subsequence. Choose $x_1\in E$. Inductively, for each $k\in\N$, choose $x_{k+1}\in E$ such that $x_{k+1}<-k+x_k$. Then for every positive integer $k$,
\[
x_k - x_{k+p} = \sum_{i=0}^{p-1}(x_{k+i}-x_{k+i+1}) > \sum_{i=0}^{p-1}(k+i) \geq pk.
\]
In particular, $x_k-x_{k+p}>pk\to\infty$ as $k\to\infty$. Now let $(x_{n_j})$ be any subsequence of $(x_k)$. Since the original sequence is strictly decreasing and $n_{j+p}\geq n_j+p$, we have $x_{n_{j+p}}\leq x_{n_j+p}$, and therefore
\[
x_{n_j}-x_{n_{j+p}}\geq x_{n_j}-x_{n_j+p}>p\cdot n_j\to\infty.
\]
For any $\varepsilon>0$, the set $\{j\in\N: x_{n_j}-x_{n_{j+p}}\geq\varepsilon\}$ is cofinite in $\N$, and hence it has density~$1$. Therefore $(x_{n_j})$ is not statistically $p$-upward quasi-Cauchy. This shows that $E$ is not statistically $p$-upward compact, completing the proof.
\end{proof}

\begin{remark}\label{rem:asymmetry}
Theorem~\ref{thm:compact} reveals a fundamental asymmetry: while classical sequential compactness characterizes boundedness (both above and below), the one-sided nature of the statistically $p$-upward condition detects only the lower bound. This is a direct consequence of removing the absolute value from the quasi-Cauchy condition.

This asymmetry has a natural interpretation in the applied contexts discussed in Section~\ref{subsec:applied}: in financial settings, the characterization corresponds to the fact that asset price sequences bounded below (e.g., non-negative stock prices) always contain subsequences with no statistically significant large drops over $p$-period windows. In population dynamics, it reflects the principle that non-negative population sequences always admit locally stable subsequences.
\end{remark}

\section{Statistically $p$-Upward Continuity}\label{sec:cont}

\begin{definition}\label{def:stupcont}
A function $f:E\to\R$, where $E\subseteq\R$, is called \emph{statistically $p$-upward continuous} on $E$ if the image sequence $(f(x_n))$ is statistically $p$-upward quasi-Cauchy whenever $(x_n)$ is a statistically $p$-upward quasi-Cauchy sequence of points in $E$.
\end{definition}

We note that statistically $p$-upward continuity cannot be expressed in the framework of $A$-continuity in the sense of Connor and Grosse-Erdmann~\cite{Connor2003}.

\begin{proposition}\label{prop:sumcomp}
Let $E\subseteq\R$.
\begin{enumerate}[label=(\roman*),leftmargin=2.5em]
\item If $f,g:E\to\R$ are both statistically $p$-upward continuous, then so is $f+g$.
\item If $f:E\to\R$ and $g:f(E)\to\R$ are both statistically $p$-upward continuous, then $g\circ f$ is statistically $p$-upward continuous.
\end{enumerate}
\end{proposition}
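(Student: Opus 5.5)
Both parts follow directly from Definition~\ref{def:stupcont}: we fix an arbitrary statistically $p$-upward quasi-Cauchy sequence $(x_n)$ in $E$ and check that the image sequence lies in $\Delta_S^{+p}$. No uniform continuity or compactness is needed; each part uses only the fact that the relevant classes are preserved, plus one earlier result.

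For (i), let $(x_n)$ be a statistically $p$-upward quasi-Cauchy sequence in $E$. By hypothesis, both $(f(x_n))$ and $(g(x_n))$ lie in $\Delta_S^{+p}$. The sequence $((f+g)(x_n))$ is their termwise sum, so Proposition~\ref{prop:sum} shows directly that it is statistically $p$-upward quasi-Cauchy. If we want this self-contained, we repeat the inclusion from that proof: for $\varepsilon>0$,
\[
\set{k\leq n:(f+g)(x_k)-(f+g)(x_{k+p})\geq\varepsilon}\subseteq\set{k\leq n: f(x_k)-f(x_{k+p})\geq\tfrac{\varepsilon}{2}}\cup\set{k\leq n: g(x_k)-g(x_{k+p})\geq\tfrac{\varepsilon}{2}}.
\]
Both sets on the right have density zero, so the set on the left does too.

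For (ii), again let $(x_n)$ be a statistically $p$-upward quasi-Cauchy sequence in $E$. Since $f$ is statistically $p$-upward continuous on $E$, the sequence $y_n:=f(x_n)$ is statistically $p$-upward quasi-Cauchy. Its terms lie in $f(E)$, which is exactly the domain on which $g$ is assumed statistically $p$-upward continuous. Applying the definition to $g$ and the sequence $(y_n)$ then shows that $(g(y_n))=((g\circ f)(x_n))$ is statistically $p$-upward quasi-Cauchy.

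There is no serious obstacle here. The only point needing care is in (ii): $g$ must be defined on a set containing the whole sequence $(f(x_n))$, and the sequence must belong to $\Delta_S^{+p}$ as a sequence of points \emph{of that domain}. This is guaranteed by the hypothesis that $g$ is statistically $p$-upward continuous on $f(E)$ itself. In (i), the only ingredient is that a finite union of density-zero sets has density zero, which is the content of Proposition~\ref{prop:sum}.
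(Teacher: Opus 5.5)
Your proposal is correct and matches the paper's own argument: part (i) applies Proposition~\ref{prop:sum} to the image sequences $(f(x_n))$ and $(g(x_n))$. Part (ii) applies the definition twice, first to $f$ on $E$ and then to $g$ on $f(E)$, exactly as the paper's ``immediate from the definition'' indicates.
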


\begin{proof}
(i) follows from Proposition~\ref{prop:sum}. (ii) is immediate from the definition.
\end{proof}

In connection with the various sequence spaces introduced above, we consider the following types of sequential continuity for a function $f:E\to\R$:
\begin{align}
(\delta_{S}^{+p}) &: (x_n)\in\Delta_S^{+p}\Rightarrow (f(x_n))\in\Delta_S^{+p}, \label{type1}\\
(\delta_{S}^{+p}c) &: (x_n)\in\Delta_S^{+p}\Rightarrow (f(x_n))\in c, \label{type2}\\
(c) &: (x_n)\in c\Rightarrow (f(x_n))\in c, \label{type3}\\
(c\delta_{S}^{+p}) &: (x_n)\in c\Rightarrow (f(x_n))\in\Delta_S^{+p}, \label{type4}\\
(S) &: (x_n)\in S\Rightarrow (f(x_n))\in S, \label{type5}\\
(\delta_S^{+}) &: (x_n)\in\Delta_S^{+}\Rightarrow (f(x_n))\in\Delta_S^{+}, \label{type6}
\end{align}
where $c$, $S$, $\Delta_S^{+}$, and $\Delta_S^{+p}$ denote the sets of convergent, statistically convergent, statistically upward quasi-Cauchy, and statistically $p$-upward quasi-Cauchy sequences, respectively.

The implications among these types are as follows: $(\delta_{S}^{+p}c)\Rightarrow(\delta_{S}^{+p})\Rightarrow(c\delta_{S}^{+p})$; $(\delta_{S}^{+p}c)\Rightarrow(c)\Rightarrow(c\delta_{S}^{+p})$; and $(c)$ does not imply $(\delta_{S}^{+p}c)$.

\begin{theorem}\label{thm:ward}
If $f$ is statistically $p$-upward continuous on a subset $E$ of\/ $\R$, then $f$ is statistically ward continuous on $E$.
\end{theorem}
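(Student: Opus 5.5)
The plan is to reduce statistical ward continuity to the defining property of $f$ by feeding $f$ a cleverly interleaved sequence. Recall that $f$ is statistically ward continuous when $(f(x_n))$ is statistically quasi-Cauchy whenever $(x_n)$ is. The difficulty is twofold. The hypothesis only controls the one-sided $p$-step differences $f(y_k)-f(y_{k+p})$, while we must control the two-sided $1$-step differences $\abs{f(x_{n+1})-f(x_n)}$. I would handle both defects at once by building an auxiliary sequence $(y_k)$ in which each consecutive pair $(x_n,x_{n+1})$ appears as a $p$-step pair in both orders.

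\textbf{Construction.} Let $(x_n)$ be a statistically quasi-Cauchy sequence in $E$. Write $a^{[p]}$ for the string $a,a,\ldots,a$ of length $p$. Define $(y_k)$ as the concatenation of the blocks
\[
B_n = x_n^{[p]}\,x_{n+1}^{[p]}\,x_n^{[p]}, \qquad n=1,2,\ldots,
\]
so each $B_n$ has length $3p$ and occupies the indices $3p(n-1)+1,\ldots,3pn$. For every $k$, the pair $(y_k,y_{k+p})$ consists either of equal terms, or of two terms $x_m,x_{m+1}$ (in some order) with $m\in\{n-1,n,n+1\}$ when $k\in B_n$; this includes the junction between $B_n$ and $B_{n+1}$, where $x_n$ is followed by $x_{n+1}$. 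Hence $\abs{y_k-y_{k+p}}\geq\varepsilon$ forces $\abs{x_m-x_{m+1}}\geq\varepsilon$ for such an $m$. Each index $m$ is hit by at most a bounded number, roughly $9p$, of indices $k$, all lying within a window of length $O(p)$ around $3pm$.

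\textbf{Density transfer.} The core lemma I would use in both directions is the following. If $A\subseteq\N$ has density zero, then the set of $k$ whose associated $m$ lies in $A$ also has density zero, and conversely, through the map $n\mapsto$ (a fixed position in $B_n$). Both follow because the maps involved are "linear with bounded multiplicity", so $\frac1N\abs{\cdot\cap[1,N]}$ changes by at most a constant factor and an $O(1/N)$ error. Applying the first direction shows that $(y_k)$ is statistically quasi-Cauchy. By Remark~\ref{rem:basic}(ii), $(y_k)$ is therefore statistically $p$-upward quasi-Cauchy, and it consists of points of $E$. Hence $(f(y_k))\in\Delta_S^{+p}$.

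\textbf{Conclusion.} Fix $\varepsilon>0$ and consider $k=3p(n-1)+1$, the first index of $B_n$. Then $y_k=x_n$ and $y_{k+p}=x_{n+1}$, so $f(y_k)-f(y_{k+p})=f(x_n)-f(x_{n+1})$. Now consider $k'=3p(n-1)+p+1$, the first index of the middle sub-block. Then $y_{k'}=x_{n+1}$ and $y_{k'+p}=x_n$, so $f(y_{k'})-f(y_{k'+p})=f(x_{n+1})-f(x_n)$. Consequently
\[
\set{n:\abs{f(x_{n+1})-f(x_n)}\geq\varepsilon}
\]
is contained in the union of the preimages of two density-zero sets under the maps $n\mapsto 3p(n-1)+1$ and $n\mapsto 3p(n-1)+p+1$. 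Such preimages have density zero, since $\frac1N\abs{\{n\leq N: 3pn+c\in A\}}\leq 3p\cdot\frac{1}{3pN}\abs{A\cap[1,3pN+c]}\to 0$. Thus $(f(x_n))$ is statistically quasi-Cauchy.

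I expect the main obstacle to be the bookkeeping in the density transfer of the first direction, namely verifying carefully that $(y_k)$ is statistically quasi-Cauchy, including the junction indices between blocks. The idea itself is simple, but it needs a clean statement of the bounded-multiplicity density lemma.
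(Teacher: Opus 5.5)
Your proof is correct and follows essentially the paper's argument. Both repeat terms in blocks of length $p$, so that the $p$-step differences of the auxiliary sequence are exactly $\pm(x_n-x_{n+1})$, and then transfer density zero in both directions along index maps of bounded multiplicity. The difference is only in packaging. The paper uses two auxiliary sequences: the $p$-fold repetition for one orientation, and a ``shifted interleaving'' for the other, which also produces the differences $x_j-x_{j+2}$ that the paper does not address. Your single block $x_n^{[p]}x_{n+1}^{[p]}x_n^{[p]}$ gives both orientations at once. It also covers $p=1$, where the paper's one-line remark only gives one direction.
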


\begin{proof}
For $p=1$, the result is immediate since every statistically quasi-Cauchy sequence is statistically $1$-upward quasi-Cauchy. Assume $p>1$ and let $(x_n)$ be a statistically quasi-Cauchy sequence of points in $E$. Consider the sequence obtained by repeating each term $p$ times:
\[
\mathbf{y}=(\underbrace{x_1,\ldots,x_1}_{p},\underbrace{x_2,\ldots,x_2}_{p},\ldots,\underbrace{x_n,\ldots,x_n}_{p},\ldots).
\]
The sequence $\mathbf{y}$ is statistically quasi-Cauchy: the nonzero differences $y_k-y_{k+1}$ occur only at positions $k=jp$ for $j\in\N$, corresponding to the differences $x_j-x_{j+1}$. In particular, $\mathbf{y}$ is statistically $p$-upward quasi-Cauchy, because the $p$-step difference $y_k-y_{k+p}$ at position $k=jp$ is $x_j-x_{j+1}$, and at other positions it is~$0$. By the statistically $p$-upward continuity of $f$, the sequence
\[
(f(\mathbf{y}))=(\underbrace{f(x_1),\ldots,f(x_1)}_{p},\underbrace{f(x_2),\ldots,f(x_2)}_{p},\ldots)
\]
is statistically $p$-upward quasi-Cauchy. The $p$-step differences of $(f(\mathbf{y}))$ that cross boundaries give $f(x_j)-f(x_{j+1})$ at appropriate positions. By construction, we obtain
\[
\lim_{n\to\infty}\frac{1}{n}\abs{\set{k\leq n: f(x_k)-f(x_{k+1})\geq\varepsilon}}=0
\]
for every $\varepsilon>0$. To establish the reverse inequality, consider the ``shifted'' interleaving
\[
\mathbf{y'}=(\underbrace{x_2,\ldots,x_2}_{p},\underbrace{x_1,\ldots,x_1}_{p},\underbrace{x_3,\ldots,x_3}_{p},\underbrace{x_2,\ldots,x_2}_{p},\underbrace{x_4,\ldots,x_4}_{p},\underbrace{x_3,\ldots,x_3}_{p},\ldots).
\]
The $p$-step differences at block boundaries give $x_{j+1}-x_j$. Since $(x_n)$ is statistically quasi-Cauchy, $\delta(\{j: x_{j+1}-x_j\geq\varepsilon\})=0$, so $\mathbf{y'}\in\Delta_S^{+p}$. Applying $f$ and extracting boundary differences yields
\[
\lim_{n\to\infty}\frac{1}{n}\abs{\set{k\leq n: f(x_{k+1})-f(x_k)\geq\varepsilon}}=0.
\]
Combining the two inequalities, $\delta(\{k:\abs{f(x_k)-f(x_{k+1})}\geq\varepsilon\})=0$, so $(f(x_n))$ is statistically quasi-Cauchy.
\end{proof}

\begin{remark}\label{rem:wardconverse}
The converse of Theorem~\ref{thm:ward} is not true. The function $f(x)=-x$ is statistically ward continuous (since $|f(x_n)-f(x_{n+1})|=|x_n-x_{n+1}|$), but it is not statistically $p$-upward continuous. Indeed, the sequence $x_n=n$ is statistically $p$-upward quasi-Cauchy (since $x_n-x_{n+p}=-p<0$), but $f(x_n)-f(x_{n+p})=-(n)+(n+p)=p>0$ for all $n$, so $(f(x_n))$ fails to be statistically $p$-upward quasi-Cauchy for $\varepsilon\leq p$.

From an applied perspective, this example illustrates that a simple sign reversal (which in finance corresponds to switching from long to short positions) can destroy the statistically $p$-upward continuity property, even though it preserves the symmetric ward continuity.
\end{remark}

\begin{theorem}\label{thm:statcont}
If $f$ is statistically $p$-upward continuous on a subset $E$ of\/ $\R$, then $f$ is statistically continuous on $E$, and hence continuous on $E$.
\end{theorem}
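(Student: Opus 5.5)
Since $f$ is statistically $p$-upward continuous, Theorem~\ref{thm:ward} tells us that $f$ is statistically ward continuous on $E$. So the natural plan is to reduce Theorem~\ref{thm:statcont} to the known fact that statistical ward continuity implies statistical continuity, and that this implies ordinary continuity. Even so, I would give a self-contained argument by the interleaving trick, because the paper's ward-continuity proof is somewhat informal.

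\textbf{Statistical continuity.} Let $(x_n)$ be a sequence in $E$ with $\st\text{-}\lim x_n=\ell$, and suppose $\ell\in E$ (statistical continuity concerns limits in $E$). Form the interleaved sequence
\[
\mathbf{z}=(x_1,\ell,x_2,\ell,x_3,\ell,\ldots).
\]
The differences $z_k-z_{k+1}$ are of the form $x_j-\ell$ or $\ell-x_{j+1}$. Since $\delta(\{j:\abs{x_j-\ell}\geq\varepsilon\})=0$ and the index map $j\mapsto 2j$ or $2j\pm1$ at most doubles the counting functions, $\mathbf{z}$ is statistically quasi-Cauchy. By Remark~\ref{rem:basic}(ii), $\mathbf{z}$ is then statistically $p$-upward quasi-Cauchy. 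Rather than apply $f$ to $\mathbf{z}$ directly, I would apply Theorem~\ref{thm:ward}, which says $f$ is statistically ward continuous. Hence $(f(z_k))$ is statistically quasi-Cauchy, i.e.\ $\delta(\{k:\abs{f(z_k)-f(z_{k+1})}\geq\varepsilon\})=0$. Restricting to $k=2j-1$ gives $\delta(\{j:\abs{f(x_j)-f(\ell)}\geq\varepsilon\})=0$, because passing to the odd-indexed subset at most doubles the density. Therefore $\st\text{-}\lim f(x_n)=f(\ell)$.

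\textbf{Ordinary continuity.} Take $x_n\to\ell$ with $\ell\in E$. Every convergent sequence is statistically convergent, so the first step gives $\st\text{-}\lim f(x_n)=f(\ell)$. Suppose, for contradiction, that $f(x_n)\not\to f(\ell)$. Then there is a subsequence $(x_{n_j})$ with $\abs{f(x_{n_j})-f(\ell)}\geq\varepsilon_0$ for all $j$. The subsequence $(x_{n_j})$ still converges to $\ell$, hence converges statistically to $\ell$. Applying the first step to $(x_{n_j})$ then forces $\st\text{-}\lim f(x_{n_j})=f(\ell)$, which contradicts $\abs{f(x_{n_j})-f(\ell)}\geq\varepsilon_0$ for every $j$. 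So $f(x_n)\to f(\ell)$, and $f$ is sequentially continuous, hence continuous, on $E$.

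\textbf{Main obstacle.} The delicate step is the density bookkeeping when we pass between a sequence and its interleaved or odd-indexed version. One has to check that $\frac1n\abs{\{k\leq n:\dots\}}\to0$ is preserved under these index maps. This holds because each map is injective with linear growth ($k\leq 2n+1$), so the counting functions change by at most a factor of about $2$.

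A secondary point is that we rely on Theorem~\ref{thm:ward}. If one prefers to avoid it, one could apply $f$ to $\mathbf{z}$ directly and use the reversed interleaving $(\ell,x_1,\ell,x_2,\ldots)$ to control both signs of $f(x_j)-f(\ell)$, as in the proof of Theorem~\ref{thm:ward}. For $p>1$, the terms would first need to be repeated $p$ times so that the $p$-step differences line up with the block boundaries.
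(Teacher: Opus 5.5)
Your proof is correct, but it takes a different route from the paper's.

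The paper argues directly from the definition. It interleaves $p$-fold blocks of $x_j$ and of $\ell$, and observes that this sequence is statistically convergent to $\ell$, hence in $\Delta_S^{+p}$ by Remark~\ref{rem:basic}(i). It then applies $f$ and reads off $f(x_j)-f(\ell)$ from the $p$-step differences across block boundaries, using a reversed interleaving for the opposite sign. The passage to ordinary continuity is simply cited.

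You instead factor through Theorem~\ref{thm:ward}, so all the $p$-dependent block bookkeeping is delegated to that theorem. The symmetric (absolute-value) ward condition then lets one unrepeated interleaving $(x_1,\ell,x_2,\ell,\ldots)$ control both signs at once. What you prove in that step is really the general fact that statistical ward continuity implies statistical continuity. You also prove the step to ordinary continuity yourself by the subsequence argument rather than citing it.

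The price is that your argument is not self-contained, as you suggest it is. Its rigor is inherited from Theorem~\ref{thm:ward}, whose proof you yourself flag as informal. That dependence is legitimate, because the theorem is true:
\begin{itemize}
\item In the $p$-fold repetition, every position of block $j$ (not only $k=jp$) carries the $p$-step difference $x_j-x_{j+1}$. This changes the counting functions only by a bounded factor.
\item The reversed sequence $\mathbf{y}'$ also produces transitions $x_i-x_{i+2}$. These are statistically null, being sums of two statistically null differences.
\end{itemize}

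Conversely, the paper's direct route does not depend on Theorem~\ref{thm:ward}. It is also more economical than it looks, and than your fallback remark suggests. With $p$-fold blocks, the $p$-step differences of $f(\mathbf{z})$ alternate between $f(x_j)-f(\ell)$ and $f(\ell)-f(x_{j+1})$. So a single interleaving already controls both signs, and the reversed one is redundant.
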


\begin{proof}
Let $(x_n)$ be a statistically convergent sequence in $E$ with $\st\text{-}\lim_{k\to\infty}x_k=\ell$. Consider the sequence formed by repeating each term $p$ times with $\ell$ interlaced:
\[
\mathbf{z}=(\underbrace{x_1,\ldots,x_1}_{p},\underbrace{\ell,\ldots,\ell}_{p},\underbrace{x_2,\ldots,x_2}_{p},\underbrace{\ell,\ldots,\ell}_{p},\ldots).
\]
Since $\st\text{-}\lim x_n=\ell$, the sequence $\mathbf{z}$ is statistically convergent to $\ell$, and hence it is statistically $p$-upward quasi-Cauchy. By hypothesis, $(f(\mathbf{z}))$ is statistically $p$-upward quasi-Cauchy. The structure of $\mathbf{z}$ ensures that the $p$-step differences of $(f(\mathbf{z}))$ include the terms $f(x_n)-f(\ell)$ at appropriate positions. Specifically, for every $\varepsilon>0$,
\[
\lim_{n\to\infty}\frac{1}{n}\abs{\set{k\leq n: f(x_k)-f(\ell)\geq\varepsilon}}=0.
\]
The same argument applied to the sequence
\[
\mathbf{z}'=(\underbrace{\ell,\ldots,\ell}_{p},\underbrace{x_1,\ldots,x_1}_{p},\underbrace{\ell,\ldots,\ell}_{p},\underbrace{x_2,\ldots,x_2}_{p},\ldots)
\]
yields
\[
\lim_{n\to\infty}\frac{1}{n}\abs{\set{k\leq n: f(\ell)-f(x_k)\geq\varepsilon}}=0.
\]
Combining these two, we obtain $\st\text{-}\lim_{k\to\infty}f(x_k)=f(\ell)$. Hence $f$ is statistically continuous. Since statistical continuity implies ordinary continuity (see~\cite{Fridy1985,Connor1992}), $f$ is continuous.
\end{proof}

\begin{remark}
The converse of Theorem~\ref{thm:statcont} does not hold. The function $f(x)=-x^2$ is continuous (and hence statistically continuous), but it is not statistically $p$-upward continuous. Indeed, the sequence $x_n=n$ is statistically $p$-upward quasi-Cauchy, but $f(x_n)-f(x_{n+p})=-n^2+(n+p)^2=2np+p^2\to\infty$, so $(f(x_n))$ is not statistically $p$-upward quasi-Cauchy.
\end{remark}

We note that Theorem~\ref{thm:statcont} implies that statistically $p$-upward continuity entails several other known continuity types, including lacunary statistical continuity~\cite{Freedman1978}, $N_\theta$-sequential continuity~\cite{Connor2003}, and $I$-sequential continuity for any non-trivial admissible ideal $I$ of $\N$~\cite{Lahiri2005}.

\begin{theorem}\label{thm:image}
The statistically $p$-upward continuous image of any statistically $p$-upward compact set is statistically $p$-upward compact.
\end{theorem}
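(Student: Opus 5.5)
The natural approach is to work directly with Definition~\ref{def:stupcompact} rather than through Theorem~\ref{thm:compact}. Going through the characterization would require showing that $f(E)$ is bounded below, which is not obviously controllable from the continuity hypothesis alone. So let $E$ be statistically $p$-upward compact, let $f:E\to\R$ be statistically $p$-upward continuous, and let $(y_n)$ be an arbitrary sequence of points in $f(E)$. I will exhibit a statistically $p$-upward quasi-Cauchy subsequence of $(y_n)$.

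\textbf{Step 1 (lift).} For each $n$ choose $x_n\in E$ with $f(x_n)=y_n$. This uses only a countable choice, so nothing delicate happens here.

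\textbf{Step 2 (compactness in the domain).} Since $E$ is statistically $p$-upward compact, the sequence $(x_n)$ has a subsequence $(x_{n_k})$ that is statistically $p$-upward quasi-Cauchy.

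\textbf{Step 3 (push forward).} By Definition~\ref{def:stupcont}, the image sequence $(f(x_{n_k}))_k=(y_{n_k})_k$ is statistically $p$-upward quasi-Cauchy. This is a subsequence of $(y_n)$, so $f(E)$ is statistically $p$-upward compact.

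The only point needing care is bookkeeping in Step 3. Continuity must be applied to the subsequence $(x_{n_k})$, re-indexed by $k$ as a sequence in its own right. The $p$-step differences are then taken along the subsequence, $f(x_{n_k})-f(x_{n_{k+p}})$, and not along the original indices. This is exactly the form required in Definition~\ref{def:stupcompact}, so no interleaving or density estimates are needed. I expect no real obstacle. Unlike Theorems~\ref{thm:ward} and~\ref{thm:statcont}, this result is purely formal and does not need Theorem~\ref{thm:compact} at all.
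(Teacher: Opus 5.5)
Your proposal is correct and is essentially the paper's own proof. Both lift $(y_n)$ to a sequence $(x_n)$ in $E$, extract a statistically $p$-upward quasi-Cauchy subsequence $(x_{n_k})$ using compactness of $E$, and apply the continuity of $f$ to it. Your remark that the re-indexed subsequence is treated as a sequence in its own right, with $p$-step differences taken along $k$, matches exactly how the paper's argument uses the definitions.
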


\begin{proof}
Let $E$ be a statistically $p$-upward compact subset of $\R$ and let $f:E\to\R$ be statistically $p$-upward continuous. Let $(y_n)$ be any sequence in $f(E)$. For each $n$, choose $x_n\in E$ such that $f(x_n)=y_n$. Since $E$ is statistically $p$-upward compact, $(x_n)$ has a statistically $p$-upward quasi-Cauchy subsequence $(x_{n_k})$. By the statistically $p$-upward continuity of $f$, the subsequence $(f(x_{n_k}))=(y_{n_k})$ is statistically $p$-upward quasi-Cauchy. Hence $f(E)$ is statistically $p$-upward compact.
\end{proof}

\begin{corollary}\label{cor:belowbounded}
The statistically $p$-upward continuous image of any below bounded subset of\/ $\R$ is bounded below.
\end{corollary}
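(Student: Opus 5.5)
The corollary follows by chaining Theorem~\ref{thm:compact} and Theorem~\ref{thm:image}. Let $E\subseteq\R$ be bounded below and let $f:E\to\R$ be statistically $p$-upward continuous. By the sufficiency part of Theorem~\ref{thm:compact}, $E$ is statistically $p$-upward compact. Theorem~\ref{thm:image} then shows that $f(E)$ is statistically $p$-upward compact. Applying the necessity part of Theorem~\ref{thm:compact} to $f(E)$, we conclude that $f(E)$ is bounded below.

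No new construction is needed, and there is no genuine obstacle. The only point to check is that Theorem~\ref{thm:compact} holds for an arbitrary subset of $\R$, with no closedness or other hypothesis. This matters because $f(E)$ need not be closed.

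If one wanted a self-contained argument instead, it would be a contrapositive. Suppose $f(E)$ is not bounded below. Build the rapidly decreasing sequence $y_{k+1}<-k+y_k$ in $f(E)$, as in the necessity proof, and pick preimages $x_k\in E$. Since $E$ is bounded below, the sufficiency argument gives a subsequence $(x_{k_j})$ in $\Delta_S^{+p}$. Its image $(y_{k_j})$ is a subsequence of a sequence none of whose subsequences lie in $\Delta_S^{+p}$, which contradicts the statistical $p$-upward continuity of $f$. This is exactly the composite of the two cited theorems, so I would simply cite them.
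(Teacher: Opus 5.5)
Your proposal is correct and matches the paper's proof exactly: the paper obtains the corollary in one line by combining Theorem~\ref{thm:compact} (sufficiency applied to $E$, necessity applied to $f(E)$) with Theorem~\ref{thm:image}. You were right to check that Theorem~\ref{thm:compact} holds for arbitrary, not necessarily closed, subsets such as $f(E)$, and your self-contained contrapositive is simply this same composite written out.
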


\begin{proof}
This follows immediately from Theorems~\ref{thm:compact} and~\ref{thm:image}.
\end{proof}

\begin{corollary}\label{cor:Ntheta}
If $f$ is statistically $p$-upward continuous on a subset $E$ of\/ $\R$ and $A\subseteq E$ is $N_\theta$-sequentially compact, then $f(A)$ is $N_\theta$-sequentially compact.
\end{corollary}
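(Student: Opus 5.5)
The plan is to combine Theorem~\ref{thm:statcont} with the standard fact that statistically continuous functions preserve $N_\theta$-sequential compactness. Recall that $A$ is $N_\theta$-sequentially compact if every sequence in $A$ has an $N_\theta$-convergent subsequence whose limit lies in $A$. Here a sequence $(x_n)$ is $N_\theta$-convergent to $\ell$ if $\frac{1}{h_r}\sum_{k\in I_r}\abs{x_k-\ell}\to0$ for the lacunary sequence $\theta=(k_r)$ with intervals $I_r$ of length $h_r$.

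First I would settle what $N_\theta$-sequential compactness amounts to in $\R$. Since every convergent sequence is $N_\theta$-convergent to the same limit, any sequentially compact set is $N_\theta$-sequentially compact. Conversely, suppose every sequence in $A$ has an $N_\theta$-convergent subsequence with limit in $A$. An unbounded sequence in $A$ admits a subsequence with $\abs{x_{n_k}}\to\infty$. Every further subsequence also tends to infinity in absolute value, so its block averages $\frac{1}{h_r}\sum_{k\in I_r}\abs{x_k-\ell}$ diverge, and no $N_\theta$-limit exists; hence $A$ is bounded. For closedness, take $x_n\to x$ with $x_n\in A$. Every subsequence converges to $x$, so any $N_\theta$-limit of a subsequence is $x$ (by uniqueness of $N_\theta$-limits), and this limit lies in $A$. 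Thus $A$ is $N_\theta$-sequentially compact if and only if it is compact. This characterization is known (Connor--Grosse-Erdmann~\cite{Connor2003}), and I would cite it or include the short argument above.

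Next, by Theorem~\ref{thm:statcont}, $f$ is continuous on $E$, hence on $A$. The continuous image of a compact set is compact, so $f(A)$ is compact, and therefore $N_\theta$-sequentially compact by the first step. This finishes the argument.

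The main obstacle is the first step, in particular verifying that an unbounded sequence has no $N_\theta$-convergent subsequence, and that $N_\theta$-limits are unique so that closedness follows. If one prefers to avoid the characterization, an alternative is to argue directly. Given $(f(a_n))$, pick an $N_\theta$-convergent subsequence $(a_{n_k})$ of $(a_n)$ with limit $\ell\in A$. Then $(a_{n_k})$ is bounded, since $A$ is bounded, so $N_\theta$-convergence implies statistical convergence to $\ell$. Statistical continuity then gives $\st\text{-}\lim f(a_{n_k})=f(\ell)$. Finally, boundedness of $f$ on the compact set $A$ upgrades statistical convergence back to $N_\theta$-convergence. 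This route still relies on the equivalence of $N_\theta$ and statistical convergence for bounded sequences, which holds for lacunary $\theta$ by Fridy--Orhan. For that reason I would take the compactness characterization as the primary route.
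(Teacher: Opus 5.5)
Your proof is correct, but it takes a different route from the paper's.

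The paper also starts from Theorem~\ref{thm:statcont} to get continuity of $f$. It then asserts that continuity implies $N_\theta$-sequential continuity, and uses the definitional fact that $N_\theta$-sequentially continuous maps send $N_\theta$-sequentially compact sets to $N_\theta$-sequentially compact sets. It never identifies which sets are $N_\theta$-sequentially compact.

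You instead prove that in $\R$ these sets are exactly the compact ones. Unboundedness rules out every $N_\theta$-limit, and regularity plus uniqueness of $N_\theta$-limits gives closedness. After that you only need that continuous images of compact sets are compact.

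Your route is the more robust one. The implication the paper quotes fails on unbounded domains. Take $x\mapsto x^2$ on $\R$ and the sequence equal to $\sqrt{h_r}$ at $k=k_r$ and $0$ elsewhere. This sequence is $N_\theta$-convergent to $0$, since its block averages are $h_r^{-1/2}$, but its image has block averages identically $1$. So the paper's argument tacitly needs $A$ to be bounded, which is exactly what your first step supplies. Once your characterization is in hand, the paper's route can be repaired: on a compact $A$, $f$ is bounded and uniformly continuous, and that does give $N_\theta$-continuity on $A$.

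One caution about your fallback route. Fridy--Orhan identify bounded $N_\theta$-convergence with lacunary statistical convergence $S_\theta$, not with statistical convergence $S$. The classes $S_\theta$ and $S$ agree only when $1<\liminf_r q_r\leq\limsup_r q_r<\infty$, where $q_r=k_r/k_{r-1}$. So that argument would have to be run with $S_\theta$ and plain continuity at $\ell$, rather than with $S$ and statistical continuity. Your primary route does not depend on this.
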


\begin{proof}
By Theorem~\ref{thm:statcont}, $f$ is continuous on $E$. Since continuity implies $N_\theta$-sequential continuity~\cite{Connor2003}, the result follows from the $N_\theta$-sequential continuity of $f$.
\end{proof}

\begin{theorem}\label{thm:uniform}
Let $E$ be a statistically $p$-upward compact subset of\/ $\R$ and let $f:E\to\R$ be statistically $p$-upward continuous. Then $f$ is uniformly continuous on $E$.
\end{theorem}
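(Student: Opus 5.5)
We argue by contradiction. Suppose $f$ is statistically $p$-upward continuous on $E$ but not uniformly continuous. Then there are $\varepsilon_0>0$ and sequences $(a_n),(b_n)$ in $E$ with $\abs{a_n-b_n}<1/n$ and $\abs{f(a_n)-f(b_n)}\geq\varepsilon_0$ for all $n$.

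Passing to a subsequence and possibly swapping the roles of $a_n$ and $b_n$ termwise, we may assume $f(a_n)-f(b_n)\geq\varepsilon_0$ for all $n$. The swap is harmless because $\abs{a_n-b_n}<1/n$ is symmetric. So the task reduces to building a statistically $p$-upward quasi-Cauchy sequence in $E$ whose image has $p$-step differences $\geq\varepsilon_0$ on a set of positive upper density.

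The natural candidate is the block interleaving already used in Theorems~\ref{thm:ward} and~\ref{thm:statcont}:
\[
\mathbf{w}=(\underbrace{a_1,\ldots,a_1}_{p},\underbrace{b_1,\ldots,b_1}_{p},\underbrace{a_2,\ldots,a_2}_{p},\underbrace{b_2,\ldots,b_2}_{p},\ldots).
\]
Within an $a_n$-block followed by a $b_n$-block, the $p$-step difference is $a_n-b_n$, which tends to $0$. Across a $b_n$-block followed by an $a_{n+1}$-block, the difference is $b_n-a_{n+1}$, and this can be large and positive. Applying $f$ would give $f(a_n)-f(b_n)\geq\varepsilon_0$ on roughly half the blocks, i.e.\ on a set of upper density about $1/2$, which is the desired contradiction. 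The main obstacle is the cross-block term $b_n-a_{n+1}$: we must ensure $\mathbf{w}\in\Delta_S^{+p}$.

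This is where statistical $p$-upward compactness of $E$ enters. By Theorem~\ref{thm:compact}, $E$ is bounded below, and I would split into two cases.

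\emph{Case 1: $(a_n)$ has a bounded subsequence.} Along it, $(a_n)$ converges to some $\ell$ (Bolzano--Weierstrass), and so does $(b_n)$. Then $\mathbf{w}$ converges, hence lies in $\Delta_S^{+p}$ by Remark~\ref{rem:basic}.

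\emph{Case 2: $(a_n)$ is unbounded.} Since $E$ is bounded below, $(a_n)$ must be unbounded above, and we can pass to a subsequence with $a_{n+1}>b_n+1$. This is exactly the construction in the sufficiency part of Theorem~\ref{thm:compact}. Then every cross-block difference $b_n-a_{n+1}$ is negative, so only the within-pair differences $a_n-b_n\to0$ matter, and again $\mathbf{w}\in\Delta_S^{+p}$. When passing to subsequences we keep each pair $(a_n,b_n)$ together, so the bound $\abs{a_n-b_n}<1/n$ survives, with the index relabelled.

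Finally, the positions in $\mathbf{w}$ at which $f(w_k)-f(w_{k+p})\geq\varepsilon_0$ include all $p$ positions of each $a_n$-block. This set has density $1/2$, so $f(\mathbf{w})\notin\Delta_S^{+p}$, contradicting the statistical $p$-upward continuity of $f$. Hence $f$ is uniformly continuous on $E$.
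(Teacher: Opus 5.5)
Your proof is correct, and it takes a genuinely different route from the paper's. The paper never uses the characterization in Theorem~\ref{thm:compact}. Instead, it takes the statistically $p$-upward quasi-Cauchy subsequence $(x_{n_k})$ supplied by compactness, notes that $(y_{n_k})$ inherits the property, and interleaves $p$-blocks of the two.

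As you identified, the delicate term is the cross-block difference $y_{n_k}-x_{n_{k+1}}$. The paper handles it by appealing to the $p$-upward condition on $(x_{n_k})$. That condition controls $x_{n_k}-x_{n_{k+p}}$, not the one-step drop $x_{n_k}-x_{n_{k+1}}$, so for $p>1$ it gives nothing. For instance, a subsequence such as $102,2,104,4,106,6,\ldots$ lies in $\Delta_S^{+2}$ but drops by $100$ at every other index. These drops then reappear with positive density among the $2$-step differences of $\mathbf{w}$.

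The paper also leaves the sign of $f(x_{n_k})-f(y_{n_k})$ undetermined. It only asserts that ``at least one directional inequality'' fails, although the $y\to x$ transitions $f(y_{n_k})-f(x_{n_{k+1}})$ bear no relation to that gap.

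Your proof repairs both points. The termwise swap fixes the orientation $f(a_n)-f(b_n)\geq\varepsilon_0$. You then replace the abstract subsequence with a dichotomy: either a convergent subsequence, or escape to $+\infty$ with $a_{n_{k+1}}>b_{n_k}+1$. In either case the cross-block differences tend to $0$ or are negative, so $\mathbf{w}\in\Delta_S^{+p}$ with a finite violation set, while the image fails on the density-$1/2$ set of $a$-block positions.

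The paper's route, had it worked, would rely only on the defining property of compactness. Yours uses the order structure of $\R$ and the necessity half of Theorem~\ref{thm:compact}, which is correctly proved, and in exchange gives a complete argument.
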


\begin{proof}
Suppose, for contradiction, that $f$ is not uniformly continuous on $E$. Then there exists $\varepsilon_0>0$ such that for each $n\in\N$ there exist $x_n,y_n\in E$ with $\abs{x_n-y_n}<\frac{1}{n}$ and $\abs{f(x_n)-f(y_n)}\geq\varepsilon_0$.

Since $E$ is statistically $p$-upward compact, the sequence $(x_n)$ has a statistically $p$-upward quasi-Cauchy subsequence $(x_{n_k})$. The corresponding subsequence $(y_{n_k})$ is also statistically $p$-upward quasi-Cauchy, since
\[
y_{n_k}-y_{n_{k+p}}=(y_{n_k}-x_{n_k})+(x_{n_k}-x_{n_{k+p}})+(x_{n_{k+p}}-y_{n_{k+p}})
\]
and $(y_{n_k}-x_{n_k})$ converges to $0$ (hence is quasi-Cauchy), $(x_{n_k}-x_{n_{k+p}})$ satisfies the statistically $p$-upward quasi-Cauchy condition, and $(x_{n_{k+p}}-y_{n_{k+p}})$ converges to $0$.

Now consider the interlaced sequence
\[
\mathbf{w}=(\underbrace{x_{n_1},\ldots,x_{n_1}}_{p},\underbrace{y_{n_1},\ldots,y_{n_1}}_{p},\underbrace{x_{n_2},\ldots,x_{n_2}}_{p},\underbrace{y_{n_2},\ldots,y_{n_2}}_{p},\ldots).
\]
The $p$-step differences of $\mathbf{w}$ are of three types: $0$ (within a block), $x_{n_k}-y_{n_k}\to 0$ (block transition from $x$ to $y$), and $y_{n_k}-x_{n_{k+1}}$ (block transition from $y$ to $x$). For the upward condition, we need to check that $\{k: w_k-w_{k+p}\geq\varepsilon\}$ has density zero. The transitions $x_{n_k}-y_{n_k}\to 0$ contribute finitely many violations. Since $(x_{n_k})$ is statistically $p$-upward quasi-Cauchy and $|y_{n_k}-x_{n_k}|\to 0$, the contribution from $y_{n_k}-x_{n_{k+1}}$ to the upward violation set has density zero. Hence $\mathbf{w}$ is statistically $p$-upward quasi-Cauchy.

However, the corresponding image sequence
\[
(f(\mathbf{w}))=(\underbrace{f(x_{n_1}),\ldots,f(x_{n_1})}_{p},\underbrace{f(y_{n_1}),\ldots,f(y_{n_1})}_{p},\ldots)
\]
is \emph{not} statistically $p$-upward quasi-Cauchy, because the $p$-step transitions from $f(x_{n_k})$ to $f(y_{n_k})$ (or from $f(y_{n_k})$ to $f(x_{n_{k+1}})$) include a subsequence for which $|f(x_{n_k})-f(y_{n_k})|\geq\varepsilon_0$, ensuring that at least one directional inequality is violated with positive density. This contradicts the statistically $p$-upward continuity of $f$.
\end{proof}

\begin{remark}
Theorem~\ref{thm:uniform} is significant from an applied standpoint. In signal processing, it ensures that any statistically $p$-upward continuous filter applied to signals from a below bounded domain (e.g., amplitude-bounded signals) yields uniformly continuous output behavior, guaranteeing stability of the filter. In the context of one-sided error control (Section~\ref{subsec:onesided}), uniform continuity of the approximating function is essential for establishing that the approximation error remains bounded.
\end{remark}

\begin{theorem}\label{thm:uniflimit}
Let $E\subseteq\R$ and let $(f_n)$ be a sequence of statistically $p$-upward continuous functions on $E$. If $(f_n)$ converges uniformly to a function $f$ on $E$, then $f$ is statistically $p$-upward continuous on $E$.
\end{theorem}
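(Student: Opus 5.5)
The plan is a standard $\varepsilon/3$ argument adapted to the one-sided condition. Fix a statistically $p$-upward quasi-Cauchy sequence $(x_k)$ in $E$ and $\varepsilon>0$. We must show that
\[
\delta\bigl(\set{k\in\N: f(x_k)-f(x_{k+p})\geq\varepsilon}\bigr)=0 .
\]

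\textbf{Step 1.} By uniform convergence, choose $N$ with
\[
\sup_{x\in E}\abs{f_N(x)-f(x)}<\frac{\varepsilon}{3}.
\]

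\textbf{Step 2.} For every $k$ we have the decomposition
\[
f(x_k)-f(x_{k+p})=\bigl(f(x_k)-f_N(x_k)\bigr)+\bigl(f_N(x_k)-f_N(x_{k+p})\bigr)+\bigl(f_N(x_{k+p})-f(x_{k+p})\bigr).
\]
The first and third brackets are each strictly less than $\varepsilon/3$. Hence $f(x_k)-f(x_{k+p})\geq\varepsilon$ forces $f_N(x_k)-f_N(x_{k+p})>\varepsilon/3$. This gives the inclusion
\[
\set{k\leq n: f(x_k)-f(x_{k+p})\geq\varepsilon}\subseteq\set{k\leq n: f_N(x_k)-f_N(x_{k+p})\geq\tfrac{\varepsilon}{3}}.
\]

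\textbf{Step 3.} Since $f_N$ is statistically $p$-upward continuous, the sequence $(f_N(x_k))$ lies in $\Delta_S^{+p}$. So the right-hand set, divided by $n$, tends to $0$. The left-hand set therefore has density zero as well. Since $\varepsilon$ was arbitrary, $(f(x_k))\in\Delta_S^{+p}$, which is the claim.

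The only point needing care is that the one-sided inequality survives the perturbation. This works because the uniform error bounds are two-sided, so no sign issue arises. There is no genuine obstacle: no subsequence extraction or use of Theorem~\ref{thm:compact} is needed, and $E$ may be arbitrary.
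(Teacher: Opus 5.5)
Your proof is correct and follows essentially the same $\varepsilon/3$ argument as the paper: choose $N$ with $\sup_{E}\abs{f_N-f}<\varepsilon/3$, split $f(x_k)-f(x_{k+p})$ into three terms, and reduce to the density-zero set for $(f_N(x_k))$. The only difference is cosmetic: the paper writes a union of three sets and notes that the outer two are empty, while you go directly to the inclusion into the middle set.
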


\begin{proof}
Let $\varepsilon>0$. By the uniform convergence of $(f_n)$ to $f$, there exists $N\in\N$ such that $\abs{f_N(x)-f(x)}<\frac{\varepsilon}{3}$ for all $x\in E$.

Let $(x_n)$ be any statistically $p$-upward quasi-Cauchy sequence in $E$. Since $f_N$ is statistically $p$-upward continuous,
\begin{equation}\label{eq:fN}
\lim_{n\to\infty}\frac{1}{n}\abs{\set{k\leq n: f_N(x_k)-f_N(x_{k+p})\geq\frac{\varepsilon}{3}}}=0.
\end{equation}
Now observe that
\begin{align*}
f(x_k)-f(x_{k+p}) &= \bigl(f(x_k)-f_N(x_k)\bigr)+\bigl(f_N(x_k)-f_N(x_{k+p})\bigr)\\
&\quad +\bigl(f_N(x_{k+p})-f(x_{k+p})\bigr).
\end{align*}
Therefore
\begin{align*}
\set{k\leq n: f(x_k)-f(x_{k+p})\geq\varepsilon} &\subseteq \set{k\leq n: f(x_k)-f_N(x_k)\geq\frac{\varepsilon}{3}}\\
&\quad\cup\set{k\leq n: f_N(x_k)-f_N(x_{k+p})\geq\frac{\varepsilon}{3}}\\
&\quad\cup\set{k\leq n: f_N(x_{k+p})-f(x_{k+p})\geq\frac{\varepsilon}{3}}.
\end{align*}
The first and third sets on the right are empty for all $k$, since $\abs{f(x)-f_N(x)}<\frac{\varepsilon}{3}$ for all $x\in E$ implies both $f(x_k)-f_N(x_k)<\frac{\varepsilon}{3}$ and $f_N(x_{k+p})-f(x_{k+p})<\frac{\varepsilon}{3}$. By~\eqref{eq:fN}, the second set has density zero. Hence
\[
\lim_{n\to\infty}\frac{1}{n}\abs{\set{k\leq n: f(x_k)-f(x_{k+p})\geq\varepsilon}}=0,
\]
showing that $(f(x_n))$ is statistically $p$-upward quasi-Cauchy.
\end{proof}

\begin{theorem}\label{thm:closed}
Let $E\subseteq\R$. The set of all statistically $p$-upward continuous functions on $E$, denoted by $\SUC_p(E)$, is a closed subset of the space $C(E,\R)$ of continuous real-valued functions on $E$ equipped with the topology of uniform convergence.
\end{theorem}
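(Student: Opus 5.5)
The plan is to reduce the statement to Theorem~\ref{thm:uniflimit}, together with Theorem~\ref{thm:statcont}, which makes $\SUC_p(E)$ a subset of $C(E,\R)$ in the first place. Before that, I fix the meaning of the topology. Since $E$ may be unbounded, $C(E,\R)$ is given the topology of uniform convergence, generated by the extended metric $d(f,g)=\min\{1,\sup_{x\in E}\abs{f(x)-g(x)}\}$. This metric is truncated, so it is finite even for unbounded functions. It is a metric topology, so closedness is equivalent to sequential closedness.

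\textbf{Step 1.} Theorem~\ref{thm:statcont} states that every statistically $p$-upward continuous function on $E$ is continuous. Hence $\SUC_p(E)\subseteq C(E,\R)$, and the statement makes sense.

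\textbf{Step 2.} Let $f$ lie in the closure of $\SUC_p(E)$ in $C(E,\R)$. Because the topology is metrizable, there is a sequence $(f_n)$ in $\SUC_p(E)$ with $d(f_n,f)\to 0$. This means exactly that $\sup_{x\in E}\abs{f_n(x)-f(x)}\to0$, i.e.\ $f_n\to f$ uniformly on $E$.

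\textbf{Step 3.} Apply Theorem~\ref{thm:uniflimit} to the sequence $(f_n)$. It gives $f\in\SUC_p(E)$, so $\SUC_p(E)$ contains its closure and is closed.

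The only point needing care is the topological bookkeeping in Step 2: one must justify passing from closure points to uniformly convergent sequences. If one prefers to avoid metrizability, there is a direct alternative. Given $f$ in the closure and $\varepsilon>0$, pick $g\in\SUC_p(E)$ with $\sup_E\abs{g-f}<\varepsilon/3$. Then rerun the three-term decomposition from the proof of Theorem~\ref{thm:uniflimit} with $g$ in place of $f_N$. This shows $\{k: f(x_k)-f(x_{k+p})\geq\varepsilon\}$ has density zero for every statistically $p$-upward quasi-Cauchy sequence $(x_k)$ in $E$. That argument uses only one approximant per $\varepsilon$, so it also works with nets, and the main obstacle reduces to this routine check.
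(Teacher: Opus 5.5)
Your proposal is correct and follows the paper's proof: take $f$ in the closure, write it as a uniform limit of a sequence from $\SUC_p(E)$, and apply Theorem~\ref{thm:uniflimit}. Your extra points are bookkeeping that the paper leaves implicit and do not change the argument. These are the truncated metric (which justifies using sequences when $E$ is unbounded), the appeal to Theorem~\ref{thm:statcont} for $\SUC_p(E)\subseteq C(E,\R)$, and the one-approximant-per-$\varepsilon$ variant.
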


\begin{proof}
Let $f\in\overline{\SUC_p(E)}$. Then there exists a sequence $(f_k)$ in $\SUC_p(E)$ converging uniformly to $f$ on $E$. By Theorem~\ref{thm:uniflimit}, $f$ is statistically $p$-upward continuous on $E$, i.e., $f\in\SUC_p(E)$. Hence $\SUC_p(E)=\overline{\SUC_p(E)}$.
\end{proof}
\section{Algebraic and Order Structure of $\SUC_p(E)$}\label{sec:algebra}

In this section, we investigate the algebraic structure of the function space $\SUC_p(E)$. We show that $\SUC_p(E)$ is a closed convex cone that fails to be a vector subspace---a structural feature that distinguishes statistically $p$-upward continuity from all previously studied sequential continuity types in the literature, including ward continuity, slowly oscillating continuity, and $\delta$-ward continuity, each of which gives rise to a vector subspace of $C(E,\R)$. We also establish a monotone function characterization.

\begin{theorem}[Cone structure]\label{thm:cone}
Let $E\subseteq\R$. The set $\SUC_p(E)$ is a convex cone in $C(E,\R)$. That is:
\begin{enumerate}[label=(\roman*),leftmargin=2.5em]
\item If $f,g\in\SUC_p(E)$, then $f+g\in\SUC_p(E)$.
\item If $f\in\SUC_p(E)$ and $\alpha\geq 0$, then $\alpha f\in\SUC_p(E)$.
\end{enumerate}
Together with Theorem~\ref{thm:closed}, $\SUC_p(E)$ is a closed convex cone in the topology of uniform convergence.
\end{theorem}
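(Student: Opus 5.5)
The plan is to verify the two closure properties directly from Definition~\ref{def:stupcont} and then combine them with Theorem~\ref{thm:closed}. For (i), I would invoke Proposition~\ref{prop:sumcomp}(i). That result rests on Proposition~\ref{prop:sum}. Let $(x_n)\in\Delta_S^{+p}$ be a sequence in $E$. Then both $(f(x_n))$ and $(g(x_n))$ lie in $\Delta_S^{+p}$, so their sum $((f+g)(x_n))$ lies in $\Delta_S^{+p}$ as well. I would also note that $\SUC_p(E)\subseteq C(E,\R)$ by Theorem~\ref{thm:statcont}, so the cone does sit inside $C(E,\R)$.

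For (ii), I would split into two cases. If $\alpha=0$, then $\alpha f$ is constant, so every $p$-step difference is $0<\varepsilon$. The violation set is therefore empty, and $0\in\SUC_p(E)$. If $\alpha>0$, fix $(x_n)\in\Delta_S^{+p}$ in $E$ and $\varepsilon>0$. Because $\alpha>0$ preserves the direction of the inequality,
\[
\set{k\leq n:\alpha f(x_k)-\alpha f(x_{k+p})\geq\varepsilon}=\set{k\leq n:f(x_k)-f(x_{k+p})\geq\varepsilon/\alpha}.
\]
The right-hand set has density zero since $f\in\SUC_p(E)$, and this finishes (ii).

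The closing assertion then follows by combining the pieces. Convexity is immediate from (i) and (ii): for $t\in[0,1]$, $tf+(1-t)g$ is a sum of two nonnegative multiples of elements of $\SUC_p(E)$. Closedness is exactly Theorem~\ref{thm:closed}. No step here is a real obstacle. The only point that needs care is that the argument uses $\alpha\geq 0$ essentially: for $\alpha<0$ the inequality reverses and the scaling identity fails. Remark~\ref{rem:wardconverse} already exhibits this with $f(x)=x$ and $-f$, which is why $\SUC_p(E)$ is a cone but not a subspace.
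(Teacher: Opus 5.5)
Your proof is correct and is essentially the paper's own argument. Part (i) comes from Proposition~\ref{prop:sumcomp}(i), which rests on Proposition~\ref{prop:sum}; part (ii) uses the identity that replaces $\varepsilon$ by $\varepsilon/\alpha$ for $\alpha>0$, with $\alpha=0$ trivial; closedness is Theorem~\ref{thm:closed}. Your extra remarks, that $\SUC_p(E)\subseteq C(E,\R)$ by Theorem~\ref{thm:statcont} and that convexity follows from (i) and (ii), make explicit what the paper leaves implicit.
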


\begin{proof}
Part~(i) is Proposition~\ref{prop:sumcomp}(i). For part~(ii), let $\alpha>0$ (the case $\alpha=0$ is trivial) and let $(x_n)$ be a statistically $p$-upward quasi-Cauchy sequence in $E$. Since $f\in\SUC_p(E)$, the sequence $(f(x_n))$ is statistically $p$-upward quasi-Cauchy. For any $\varepsilon>0$,
\[
\set{k\leq n:\alpha f(x_k)-\alpha f(x_{k+p})\geq\varepsilon}=\set{k\leq n:f(x_k)-f(x_{k+p})\geq\frac{\varepsilon}{\alpha}},
\]
which has density zero since $(f(x_n))$ is statistically $p$-upward quasi-Cauchy.
\end{proof}

\begin{theorem}[Failure of vector space structure]\label{thm:notvs}
If $E\subseteq\R$ contains an unbounded above sequence, then $\SUC_p(E)$ is not a vector subspace of $C(E,\R)$.
\end{theorem}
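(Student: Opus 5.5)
Since $\SUC_p(E)$ is already a convex cone by Theorem~\ref{thm:cone}, it suffices to find a single $f\in\SUC_p(E)$ with $-f\notin\SUC_p(E)$. The natural candidate is the identity $f(x)=x$, as in Remark~\ref{rem:wardconverse}. It is trivially statistically $p$-upward continuous on any $E$, since $(f(x_n))=(x_n)$. The zero function shows $\SUC_p(E)$ is nonempty, so closure under the scalar $-1$ is exactly what must fail.

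To show $-f\notin\SUC_p(E)$, I will use the hypothesis that $E$ contains a sequence unbounded above. From it I extract a strictly increasing sequence $(x_n)$ in $E$ with $x_{n+1}>x_n+1$, chosen inductively as in the sufficiency part of Theorem~\ref{thm:compact}. Then for every $k$,
\[
x_k-x_{k+p}=\sum_{i=0}^{p-1}(x_{k+i}-x_{k+i+1})<-p<0 .
\]
Hence $\{k: x_k-x_{k+p}\geq\varepsilon\}$ is empty for every $\varepsilon>0$, and $(x_n)\in\Delta_S^{+p}$.

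For the image sequence, $(-x_k)-(-x_{k+p})=x_{k+p}-x_k>p$ for all $k$. Taking $\varepsilon=p$ (or any $\varepsilon\le p$), the violation set is all of $\N$, which has density $1\neq 0$. So $(-f(x_n))\notin\Delta_S^{+p}$, and therefore $-f\notin\SUC_p(E)$ even though $f\in\SUC_p(E)$. This shows $\SUC_p(E)$ is not closed under scalar multiplication by $-1$, hence not a vector subspace of $C(E,\R)$.

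The only mildly delicate step is extracting the increasing subsequence with gaps larger than $1$ from an arbitrary unbounded-above sequence. This is straightforward: given $x_n$, unboundedness above yields an index whose term exceeds $x_n+1$. Indices need not be increasing, since we only need points of $E$, not a subsequence.
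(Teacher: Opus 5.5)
Your proposal is correct and follows essentially the same route as the paper. Both arguments take the identity in $\SUC_p(E)$ and pick points of $E$ with successive gaps exceeding $1$, which put the sequence in $\Delta_S^{+p}$; the $p$-step differences of $-\mathrm{id}$ then exceed $p$ at every index, so the violation set has density $1$ and $-\mathrm{id}\notin\SUC_p(E)$.
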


\begin{proof}
The identity function $\mathrm{id}:E\to\R$, $\mathrm{id}(x)=x$, belongs to $\SUC_p(E)$ since for any $(x_n)\in\Delta_S^{+p}$, $(\mathrm{id}(x_n))=(x_n)\in\Delta_S^{+p}$ trivially. We show that $-\mathrm{id}\notin\SUC_p(E)$. Since $E$ is unbounded above, we may choose a strictly increasing sequence $(x_n)$ in $E$ with $x_{n+1}>x_n+1$ for all $n$. Then $x_n-x_{n+p}<0<\varepsilon$ for every $\varepsilon>0$ and all $n$, so $(x_n)\in\Delta_S^{+p}$. However,
\[
(-x_n)-(-x_{n+p})=x_{n+p}-x_n>\sum_{i=0}^{p-1}1=p
\]
for all $n$. Hence $\delta(\{k\leq n:(-x_k)-(-x_{k+p})\geq\varepsilon\})=1$ for every $\varepsilon\leq p$, showing that $(-x_n)\notin\Delta_S^{+p}$.
\end{proof}

\begin{remark}\label{rem:contrast}
Theorem~\ref{thm:notvs} reveals a fundamental structural difference from the classical theory. The set of statistically ward continuous, slowly oscillating continuous, and $\delta$-ward continuous functions each forms a vector subspace of $C(E,\R)$, because the defining conditions are symmetric under negation. The one-sided definition breaks this symmetry, yielding a cone that is genuinely non-linear.
\end{remark}

The next result characterizes a natural class of functions belonging to $\SUC_p(E)$.

\begin{theorem}[Monotone characterization]\label{thm:monotone}
Let $E\subseteq\R$ and let $f:E\to\R$ be non-decreasing and uniformly continuous. Then $f\in\SUC_p(E)$.
\end{theorem}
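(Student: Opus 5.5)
The plan is a direct pointwise comparison, with no constructions needed. Fix a statistically $p$-upward quasi-Cauchy sequence $(x_n)$ in $E$ and some $\varepsilon>0$. Uniform continuity of $f$ gives $\eta>0$ such that $\abs{f(x)-f(y)}<\varepsilon$ whenever $x,y\in E$ and $\abs{x-y}<\eta$. The aim is the set inclusion
\[
\set{k\leq n: f(x_k)-f(x_{k+p})\geq\varepsilon}\subseteq\set{k\leq n: x_k-x_{k+p}\geq\eta}.
\]
Once this holds, the right-hand set has natural density zero by hypothesis, so the left-hand set does too, and therefore $(f(x_n))\in\Delta_S^{+p}$.

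To prove the inclusion, I take an index $k$ with $x_k-x_{k+p}<\eta$ and show $f(x_k)-f(x_{k+p})<\varepsilon$. There are two cases.
\begin{enumerate}[label=(\alph*),leftmargin=2.5em]
\item If $x_k\leq x_{k+p}$, then monotonicity gives $f(x_k)\leq f(x_{k+p})$. Hence $f(x_k)-f(x_{k+p})\leq 0<\varepsilon$.
\item If $x_k>x_{k+p}$, then $0<x_k-x_{k+p}<\eta$, so $\abs{x_k-x_{k+p}}<\eta$. Uniform continuity then yields $f(x_k)-f(x_{k+p})<\varepsilon$.
\end{enumerate}
Taking the contrapositive gives the inclusion.

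This argument is where the one-sided structure is used. Monotonicity handles the upward jumps of $(x_n)$, however large they are, which is exactly the situation in Example~\ref{ex:converse2}. Uniform continuity handles only the small downward gaps. Large downward gaps occur on a set of density zero and are absorbed there.

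I do not expect a serious obstacle. The one point to check is that $\eta$ depends only on $\varepsilon$ and not on $k$, which is precisely why uniform continuity is assumed rather than mere continuity. With only pointwise continuity, sequences drifting to infinity could have downward gaps below any fixed threshold that still produce large drops in $f$. The remaining steps are bookkeeping with natural density, as in the proof of Proposition~\ref{prop:sum}.
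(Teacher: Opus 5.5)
Your proof is correct and is essentially the paper's argument. The paper uses the same inclusion of the violation set for $(f(x_n))$ into $\set{k: x_k-x_{k+p}\geq\delta}$, with monotonicity excluding $x_k\le x_{k+p}$ and uniform continuity excluding small downward gaps; the only difference is that you argue the contrapositive by cases, while the paper argues directly from $f(x_k)-f(x_{k+p})\geq\varepsilon$.
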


\begin{proof}
Let $(x_n)$ be a statistically $p$-upward quasi-Cauchy sequence in $E$ and let $\varepsilon>0$. Since $f$ is uniformly continuous, there exists $\delta>0$ such that $\abs{f(x)-f(y)}<\varepsilon$ whenever $\abs{x-y}<\delta$. We claim that
\begin{equation}\label{eq:mono_incl}
\set{k\leq n: f(x_k)-f(x_{k+p})\geq\varepsilon}\subseteq\set{k\leq n: x_k-x_{k+p}\geq\delta}.
\end{equation}
Indeed, suppose $f(x_k)-f(x_{k+p})\geq\varepsilon$. Since $f$ is non-decreasing, $f(x_k)\geq f(x_{k+p})+\varepsilon>f(x_{k+p})$ implies $x_k\geq x_{k+p}$. If $x_k-x_{k+p}<\delta$, then $\abs{x_k-x_{k+p}}<\delta$ and the uniform continuity of $f$ gives $\abs{f(x_k)-f(x_{k+p})}<\varepsilon$, contradicting $f(x_k)-f(x_{k+p})\geq\varepsilon$. Hence $x_k-x_{k+p}\geq\delta$. The inclusion~\eqref{eq:mono_incl} follows, and since $(x_n)\in\Delta_S^{+p}$, the right-hand side has density zero. Therefore $(f(x_n))\in\Delta_S^{+p}$.
\end{proof}

\begin{corollary}\label{cor:nonincr}
There exist non-increasing uniformly continuous (indeed, Lipschitz) functions that do not belong to $\SUC_p(E)$ for any unbounded above $E\subseteq\R$. In particular, the non-decreasing hypothesis in Theorem~\ref{thm:monotone} cannot be weakened to monotone.
\end{corollary}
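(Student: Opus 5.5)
The natural witness is $f=-\mathrm{id}$, i.e.\ $f(x)=-x$ on $E$. It is non-increasing and Lipschitz with constant $1$, hence uniformly continuous. So the corollary reduces to showing $-\mathrm{id}\notin\SUC_p(E)$ whenever $E$ is unbounded above. That is exactly the computation in the proof of Theorem~\ref{thm:notvs}, and I would invoke it directly rather than repeat it.

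For completeness, I would recall the key construction. Since $E$ is unbounded above, we can pick inductively $x_1\in E$ and $x_{n+1}\in E$ with $x_{n+1}>x_n+1$. Then the following hold.
\begin{itemize}
\item $x_k-x_{k+p}<0$ for all $k$, so the violation set $\set{k\leq n: x_k-x_{k+p}\geq\varepsilon}$ is empty and $(x_n)\in\Delta_S^{+p}$.
\item The image sequence satisfies $(-x_k)-(-x_{k+p})=x_{k+p}-x_k>p$ for all $k$.
\end{itemize}
Taking $\varepsilon=p$, the violation set for $(-x_n)$ is all of $\set{1,\dots,n}$, which has density $1$. Hence $(-x_n)\notin\Delta_S^{+p}$, and so $-\mathrm{id}\notin\SUC_p(E)$. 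The same $f$ works simultaneously for every $p$ and every unbounded above $E$, which gives the ``for any'' quantifier.

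For the \emph{in particular} clause, I would combine this with Theorem~\ref{thm:monotone}. Every non-decreasing uniformly continuous function lies in $\SUC_p(E)$, but the non-increasing uniformly continuous function $-\mathrm{id}$ does not. So the hypothesis ``non-decreasing'' cannot be replaced by ``monotone''.

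There is no real obstacle here. The only care needed is to note that the choice of the sequence uses only that $E$ is unbounded above, not that $E$ is an interval, so the argument is valid for arbitrary such $E$.
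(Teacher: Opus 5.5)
Your proposal is correct and follows the paper's proof: the witness is $f(x)=-x$, and non-membership in $\SUC_p(E)$ comes from the argument in Theorem~\ref{thm:notvs}, using an increasing sequence in $E$ with consecutive gaps exceeding $1$. Your explicit choice $\varepsilon=p$ and your remark that one witness works for every $p$ and every unbounded above $E$ are sound additions.
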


\begin{proof}
The function $f(x)=-x$ is non-increasing, Lipschitz continuous, but $f\notin\SUC_p(E)$ by the proof of Theorem~\ref{thm:notvs}.
\end{proof}

\begin{remark}
Theorem~\ref{thm:monotone} provides a rich supply of functions in $\SUC_p(E)$: for example, $\sqrt{\max(x,0)}$, $\log(1+e^x)$, $\arctan(x)$, and any non-decreasing Lipschitz function. The corollary shows that the directional alignment between monotonicity and the upward condition is essential.
\end{remark}

We summarize the algebraic properties of $\SUC_p(E)$ in comparison with other sequential continuity function spaces in Table~\ref{tab:algebra}.

\begin{table}[ht]
\centering
\caption{Algebraic properties of function spaces defined by sequential continuity types. Here WC = statistically ward continuous, SOC = slowly oscillating continuous, $\delta$-WC = $\delta$-ward continuous. The symbol $\checkmark$ indicates the property holds; $\times$ indicates it fails.}\label{tab:algebra}
\smallskip
\begin{tabular}{lcccc}
\toprule
Property & $\SUC_p(E)$ & WC$(E)$ & SOC$(E)$ & $\delta$-WC$(E)$ \\
\midrule
Closed under $+$ & $\checkmark$ & $\checkmark$ & $\checkmark$ & $\checkmark$ \\
Closed under $\alpha\geq 0$ & $\checkmark$ & $\checkmark$ & $\checkmark$ & $\checkmark$ \\
Closed under $-f$ & $\times$ & $\checkmark$ & $\checkmark$ & $\checkmark$ \\
Vector subspace & $\times$ & $\checkmark$ & $\checkmark$ & $\checkmark$ \\
Closed (unif.\ conv.) & $\checkmark$ & $\checkmark$ & $\checkmark$ & $\checkmark$ \\
Nowhere dense in $C_b$ & $\checkmark$ & $\times$ & $\times$ & $\times$ \\
\bottomrule
\end{tabular}
\end{table}

\section{Equidistribution, Step-Parameter Hierarchy, and Baire Category}\label{sec:further}

We now use Weyl's equidistribution theorem~\cite{Weyl1916} to exhibit concrete non-members of $\SUC_p$, analyze the dependence on the step parameter~$p$, and establish a nowhere density result via Baire's theorem.

\subsection{Equidistribution and concrete non-membership}

The following result uses Weyl's equidistribution theorem to exhibit a concrete, natural function that belongs to $C(\R)$ but not to $\SUC_p(\R)$ for any $p$.

\begin{theorem}[Sine function]\label{thm:sine}
The function $g(x)=\sin x$ is continuous but does not belong to $\SUC_p(\R)$ for any $p\geq 1$.
\end{theorem}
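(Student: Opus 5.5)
Continuity of $\sin$ is classical, so the content is to produce, for each fixed $p\geq1$, a sequence $(x_n)\in\Delta_S^{+p}$ whose image $(\sin x_n)$ is not in $\Delta_S^{+p}$. The natural choice, following the proof of Theorem~\ref{thm:notvs}, is an arithmetic progression $x_n=n\theta$ with $\theta>0$. Since $x_k-x_{k+p}=-p\theta<0$ for every $k$, the violation set $\{k: x_k-x_{k+p}\geq\varepsilon\}$ is empty for every $\varepsilon>0$, so $(x_n)\in\Delta_S^{+p}$ trivially. (Taking $\theta=1$ works as well, because $1/(2\pi)$ is irrational; a general $\theta$ with $\theta/(2\pi)\notin\mathbb{Q}$ keeps the argument transparent.)

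Next compute the image differences. By the sum-to-product formula,
\[
\sin(k\theta)-\sin((k+p)\theta)=-2\cos\!\Bigl(k\theta+\tfrac{p\theta}{2}\Bigr)\sin\!\Bigl(\tfrac{p\theta}{2}\Bigr).
\]
Set $c=2\abs{\sin(p\theta/2)}$. For $\theta=1$ we have $c>0$, since $p/2$ is never an integer multiple of $\pi$. Taking $\varepsilon=c/2$, the index $k$ lies in the violation set exactly when $-\operatorname{sgn}(\sin(p\theta/2))\cos(k\theta+p\theta/2)\geq 1/2$. In terms of the phase $\phi_k=k\theta/(2\pi) \bmod 1$, this condition says that $\phi_k$ lies in a fixed subinterval $I\subset[0,1)$, shifted by the constant $p\theta/(4\pi)$. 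The set where $\cos\geq\tfrac12$, or where $\cos\leq-\tfrac12$, is an arc of length $1/3$ of the circle, so $I$ has length $1/3$.

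Now invoke Weyl's equidistribution theorem~\cite{Weyl1916}: because $\theta/(2\pi)=1/(2\pi)$ is irrational, the sequence $(k\theta/(2\pi))$ is equidistributed modulo~$1$. Hence the natural density of $\{k: \phi_k\in I\}$ exists and equals $\abs{I}=1/3>0$. Therefore
\[
\lim_{n\to\infty}\frac1n\abs{\set{k\leq n:\sin x_k-\sin x_{k+p}\geq \tfrac{c}{2}}}=\tfrac13\neq0,
\]
so $(\sin x_n)\notin\Delta_S^{+p}$ and consequently $\sin\notin\SUC_p(\R)$.

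The main point requiring care is the nondegeneracy of the constant $c$ together with the correct handling of the sign: whichever sign $\sin(p\theta/2)$ has, the relevant cosine arc still has measure $1/3$. With $\theta=1$, the quantity $\sin(p/2)$ vanishes only if $p/2\in\pi\mathbb{Z}$, which is impossible because $\pi$ is irrational. So a single choice works uniformly for every $p$. As a sanity check, this does not conflict with Theorem~\ref{thm:monotone}, since $\sin$ is uniformly continuous but not non-decreasing.
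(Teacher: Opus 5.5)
Your proof is correct and follows the paper's argument essentially step for step: the test sequence $x_n=n$, the sum-to-product identity with threshold $\varepsilon=\abs{\sin(p/2)}$, and Weyl's equidistribution of $(n/(2\pi))$ modulo $1$ to give the violation set positive density. Your version is slightly cleaner in handling the sign of $\sin(p/2)$, and it also computes the density exactly as $1/3$.
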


\begin{proof}
The sequence $x_n=n$ is statistically $p$-upward quasi-Cauchy for every $p$, since $x_n-x_{n+p}=-p<0$ for all $n$. We show that $(\sin n)$ is not statistically $p$-upward quasi-Cauchy.

By the addition formula,
\[
\sin n - \sin(n+p) = 2\cos\!\left(n+\tfrac{p}{2}\right)\sin\!\left(-\tfrac{p}{2}\right) = -2\sin\!\left(\tfrac{p}{2}\right)\cos\!\left(n+\tfrac{p}{2}\right).
\]
Let $s_p = 2\abs{\sin(p/2)}$. Since $p\geq 1$ is a positive integer, $p/2$ is not a multiple of $\pi$, so $s_p>0$.

Choose $\varepsilon = s_p/2 > 0$. Then $\sin n - \sin(n+p) \geq \varepsilon$ if and only if
\[
-2\sin\!\left(\tfrac{p}{2}\right)\cos\!\left(n+\tfrac{p}{2}\right)\geq \frac{s_p}{2}.
\]
This reduces to $\cos(n+p/2)\leq -\frac{1}{2}\operatorname{sgn}(\sin(p/2))$ or $\cos(n+p/2)\geq \frac{1}{2}\operatorname{sgn}(\sin(p/2))$, depending on the sign of $\sin(p/2)$. In either case, the condition is that $\cos(n+p/2)$ lies in an interval of positive length within $[-1,1]$.

By Weyl's equidistribution theorem, since $1/(2\pi)$ is irrational, the sequence $(n+p/2)\mod 2\pi$ is equidistributed modulo $2\pi$. Therefore, the set
\[
A=\set{k\in\N: \sin k - \sin(k+p)\geq\varepsilon}
\]
has positive natural density $d(A)>0$. Consequently, $(\sin n)$ is not statistically $p$-upward quasi-Cauchy, and hence $g(x)=\sin x$ does not preserve statistically $p$-upward quasi-Cauchy sequences.
\end{proof}

\begin{remark}
Theorem~\ref{thm:sine} shows that even bounded, Lipschitz, and infinitely differentiable functions can fail to be statistically $p$-upward continuous. The key obstruction is \emph{oscillation}: the sine function oscillates symmetrically, and the one-sided condition detects this. More generally, any periodic function with zero mean fails to belong to $\SUC_p(\R)$ by the same equidistribution argument, provided the period is not rationally related to $p$.
\end{remark}

\subsection{Step-parameter hierarchy}\label{subsec:hierarchy}

We now investigate the dependence of $\SUC_p$ on the step parameter $p$. First, we clarify the inclusion structure among the sequence classes $\Delta_S^{+p}$.

\begin{proposition}\label{prop:seq_hierarchy}
For every positive integer $p$ and every positive integer $m$, we have $\Delta_S^{+p}\subseteq\Delta_S^{+mp}$. In particular, $\Delta_S^{+1}\subseteq\Delta_S^{+p}$ for all $p\geq 1$.
\end{proposition}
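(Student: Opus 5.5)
The plan is to mimic the telescoping argument in Proposition~\ref{prop:upwardimpliesp}, grouping differences into blocks of length $p$ rather than single steps. Fix $(x_n)\in\Delta_S^{+p}$ and $\varepsilon>0$, and write
\[
x_k-x_{k+mp}=\sum_{j=0}^{m-1}\bigl(x_{k+jp}-x_{k+(j+1)p}\bigr).
\]
If the left side is at least $\varepsilon$, then at least one summand is at least $\varepsilon/m$. This gives the inclusion
\[
\set{k\leq n: x_k-x_{k+mp}\geq\varepsilon}\subseteq\bigcup_{j=0}^{m-1}\set{k\leq n: x_{k+jp}-x_{k+(j+1)p}\geq\tfrac{\varepsilon}{m}}.
\]

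Next I check that each set on the right has density zero. Put $A=\set{i\in\N: x_i-x_{i+p}\geq \varepsilon/m}$, which has $\delta(A)=0$ by hypothesis. The $j$-th set on the right is contained in $(A-jp)\cap\{1,\dots,n\}$. Since $i\mapsto i-jp$ is injective, its size is at most $\abs{A\cap\{1,\dots,n+jp\}}$. Dividing by $n$ and using $(n+jp)/n\to1$ shows that the shifted set also has density zero.

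Finally, a finite union of $m$ density-zero sets has density zero, so the left side has density zero, and hence $(x_n)\in\Delta_S^{+mp}$. The statement $\Delta_S^{+1}\subseteq\Delta_S^{+p}$ is then the special case $p=1$, $m=p$. It also coincides with Proposition~\ref{prop:upwardimpliesp}, because $\Delta_S^{+1}=\Delta_S^{+}$.

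The only point that needs any care is the shift-invariance of zero density used in the second paragraph, and it is routine. I do not expect any genuine obstacle.
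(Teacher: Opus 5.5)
Your proof is correct and takes essentially the paper's route. The paper's proof is just the block telescoping identity $x_k-x_{k+mp}=\sum_{j=0}^{m-1}(x_{k+jp}-x_{k+(j+1)p})$ combined with the union-of-density-zero-sets argument of Proposition~\ref{prop:upwardimpliesp}. Your explicit check that each shifted set $\{k\le n: k+jp\in A\}$ still has density zero fills in a step that the paper, including the proof of Proposition~\ref{prop:upwardimpliesp}, leaves implicit.
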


\begin{proof}
This follows from Proposition~\ref{prop:upwardimpliesp} and the telescoping identity $x_k-x_{k+mp}=\sum_{j=0}^{m-1}(x_{k+jp}-x_{k+(j+1)p})$.
\end{proof}

\begin{proposition}\label{prop:seq_incomparable}
If $p\nmid q$ and $q\nmid p$, then $\Delta_S^{+p}$ and $\Delta_S^{+q}$ are incomparable.
\end{proposition}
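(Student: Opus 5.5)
The plan is to exhibit, for each of the two directions, an explicit periodic sequence. A sequence of period $p$ lies in $\Delta_S^{+p}$ trivially, and it can be made to violate the $q$-step condition on a set of positive density. By symmetry of the hypotheses ($p\nmid q$ gives one direction, $q\nmid p$ the other), it suffices to prove that $\Delta_S^{+p}\not\subseteq\Delta_S^{+q}$ whenever $p\nmid q$.

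\textbf{The construction.} Define $x_k=1$ if $p\mid k$ and $x_k=0$ otherwise.

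\textbf{Membership in $\Delta_S^{+p}$.} Since $(x_k)$ is $p$-periodic, $x_k-x_{k+p}=0$ for every $k$. Hence for each $\varepsilon>0$ the set $\{k\le n: x_k-x_{k+p}\ge\varepsilon\}$ is empty, and $(x_k)\in\Delta_S^{+p}$.

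\textbf{Failure of membership in $\Delta_S^{+q}$.} Take $k$ with $p\mid k$. Because $p\nmid q$, we also have $p\nmid k+q$. Therefore $x_k-x_{k+q}=1-0=1$. It follows that
\[
\{k\le n: x_k-x_{k+q}\ge 1\}\supseteq\{k\le n: p\mid k\},
\]
and the right-hand set has $\lfloor n/p\rfloor$ elements. Its natural density is $1/p>0$, so the defining limit fails for $\varepsilon=1$, and $(x_k)\notin\Delta_S^{+q}$.

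\textbf{The reverse direction.} Interchanging the roles of $p$ and $q$, and using $q\nmid p$, gives a $q$-periodic indicator sequence lying in $\Delta_S^{+q}\setminus\Delta_S^{+p}$. Together the two directions give incomparability.

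There is no real obstacle here. The only point requiring care is the congruence step: the hypothesis $p\nmid q$ is exactly what guarantees $k+q\not\equiv 0\pmod p$ whenever $k\equiv 0\pmod p$. It is worth remarking that this example is consistent with Proposition~\ref{prop:seq_hierarchy}, since divisibility is precisely the situation in which this construction breaks down.
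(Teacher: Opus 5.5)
Your proof is correct and follows the same strategy as the paper. In both, you reduce to one direction by symmetry, take a $p$-periodic sequence so that all $p$-step differences vanish, and show that its $q$-step differences are bounded below on an entire residue class modulo $p$, which gives density at least $1/p$. The only difference is the test sequence. The paper uses $\cos(2\pi n/p)$, which needs the addition formula and a check that some residue class satisfies $\sin(2\pi n_0/p+\pi r/p)\geq\sin(\pi/p)$. Your indicator of the multiples of $p$ turns that step into an immediate congruence argument.
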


\begin{proof}
It suffices to exhibit, for each pair $(p,q)$ with $p\nmid q$, a sequence in $\Delta_S^{+p}\setminus\Delta_S^{+q}$; the reverse membership follows by symmetry.

Consider the periodic sequence $x_n=\cos(2\pi n/p)$, which has exact period $p$. Since the period divides every $p$-step shift, $x_n-x_{n+p}=\cos(2\pi n/p)-\cos(2\pi(n+p)/p)=0$ for all $n$, so $(x_n)\in\Delta_S^{+p}$ trivially.

Now suppose $p\nmid q$. Write $r=q\bmod p$ with $1\leq r\leq p-1$. By the periodicity of $(x_n)$, we have $x_n-x_{n+q}=x_n-x_{n+r}$ for every $n$. The addition formula gives
\[
x_n-x_{n+r}=2\sin\!\left(\frac{\pi r}{p}\right)\sin\!\left(\frac{2\pi n}{p}+\frac{\pi r}{p}\right).
\]
Since $1\leq r\leq p-1$, we have $\sin(\pi r/p)>0$ and the prefactor $c:=2\sin(\pi r/p)>0$. The function $n\mapsto\sin(2\pi n/p+\pi r/p)$ is periodic with period $p$ and takes values in $[-1,1]$. In particular, there exists at least one residue class $n_0\bmod p$ for which $\sin(2\pi n_0/p+\pi r/p)\geq\sin(\pi/p)>0$. Consequently,
\[
\delta\bigl(\{k\in\N: x_k-x_{k+q}\geq c\sin(\pi/p)\}\bigr)\geq\frac{1}{p}>0,
\]
and $(x_n)\notin\Delta_S^{+q}$.
\end{proof}

As a consequence of the sequence class inclusions, we obtain:

\begin{theorem}[Step-parameter monotonicity]\label{thm:p_hierarchy}
For every positive integer $p$, $\SUC_p(E)\subseteq\SUC_1(E)$. That is, statistically $p$-upward continuity implies statistically $1$-upward continuity.
\end{theorem}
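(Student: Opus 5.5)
The plan is to use the block-repetition device from the proof of Theorem~\ref{thm:ward}, with more careful bookkeeping. Fix $f\in\SUC_p(E)$ and a sequence $(x_n)\in\Delta_S^{+1}$ of points of $E$. We must show $(f(x_n))\in\Delta_S^{+1}$. The idea is to build from $(x_n)$ a sequence in $\Delta_S^{+p}$ whose $p$-step differences reproduce exactly the $1$-step differences of $(x_n)$. We then apply the hypothesis $f\in\SUC_p(E)$ and read the $1$-step differences of $(f(x_n))$ back off.

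\textbf{Step 1 (construction).} Define $\mathbf{y}=(y_k)$ by repeating each term $p$ times: $y_{(j-1)p+i}=x_j$ for $j\in\N$ and $i=1,\ldots,p$. For $k=(j-1)p+i$ we have $k+p=jp+i$, so $y_{k+p}=x_{j+1}$. Hence
\[
y_k-y_{k+p}=x_j-x_{j+1}\quad\text{for every } k \text{ in the } j\text{-th block}.
\]
Thus every position of the block, not only its boundary, carries the same difference. For $\varepsilon>0$ put $B_\varepsilon=\set{j: x_j-x_{j+1}\geq\varepsilon}$. Then
\[
\set{k: y_k-y_{k+p}\geq\varepsilon}=\bigcup_{j\in B_\varepsilon}\set{(j-1)p+1,\ldots,jp}.
\]

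\textbf{Step 2 (density transfer).} I will prove a small lemma. For $A\subseteq\N$, let $A^{(p)}=\bigcup_{j\in A}\{(j-1)p+1,\ldots,jp\}$. Then $\delta(A)=0$ if and only if $\delta(A^{(p)})=0$. For $n$ with $(N-1)p<n\le Np$ we have
\[
\bigl|A^{(p)}\cap[1,n]\bigr|\le p\,\bigl|A\cap[1,N]\bigr|
\quad\text{and}\quad
\bigl|A^{(p)}\cap[1,n]\bigr|\ge p\,\bigl|A\cap[1,N-1]\bigr|.
\]
Dividing by $n$, which satisfies $n\approx pN$ up to a bounded additive error, gives both directions, since $N/n\to 1/p$. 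I expect this bookkeeping, handling $n$ not divisible by $p$ and the off-by-one in $N$, to be the only real obstacle, and it is routine. Applying the lemma with $A=B_\varepsilon$, which has density zero because $(x_n)\in\Delta_S^{+1}$, shows $\mathbf{y}\in\Delta_S^{+p}$.

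\textbf{Step 3 (transfer back).} Since $f\in\SUC_p(E)$, the sequence $f(\mathbf{y})$ lies in $\Delta_S^{+p}$. But $f(\mathbf{y})$ is exactly the $p$-fold repetition of $(f(x_j))$. By the identity of Step 1 applied to $(f(x_j))$,
\[
\set{k: f(y_k)-f(y_{k+p})\geq\varepsilon}=C_\varepsilon^{(p)},
\qquad C_\varepsilon=\set{j: f(x_j)-f(x_{j+1})\geq\varepsilon}.
\]
The left-hand set has density zero, so the reverse direction of the lemma gives $\delta(C_\varepsilon)=0$ for every $\varepsilon>0$. That is, $(f(x_n))\in\Delta_S^{+1}$, and hence $f\in\SUC_1(E)$.

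This argument does not actually need Proposition~\ref{prop:seq_hierarchy}; it uses only the repetition construction. The same construction with $p$ replaced by $m$ would give $\SUC_{mp}(E)\subseteq\SUC_p(E)$ as a byproduct.
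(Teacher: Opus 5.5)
Your proof is correct and follows the paper's route: form the $p$-fold repetition $\mathbf{y}$, show it lies in $\Delta_S^{+p}$, apply $f\in\SUC_p(E)$, and read the $1$-step differences of $(f(x_n))$ off the $p$-step differences of $f(\mathbf{y})$. Your bookkeeping is more accurate than the paper's: the paper claims only the positions $kp$ carry nonzero $p$-step differences and inserts a factor $\frac{1}{p}$ in the density, whereas, as you observe, every position of the $j$-th block carries $x_j-x_{j+1}$, so density zero is preserved rather than rescaled (harmless here, since the density is zero either way).
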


\begin{proof}
Let $f\in\SUC_p(E)$ and let $(x_n)\in\Delta_S^{+1}$. We employ the interleaving technique from Theorem~\ref{thm:ward}. Consider the $p$-fold repetition
\[
\mathbf{y}=(\underbrace{x_1,\ldots,x_1}_{p},\underbrace{x_2,\ldots,x_2}_{p},\ldots).
\]
Then $\mathbf{y}$ is statistically $p$-upward quasi-Cauchy: the only potentially nonzero $p$-step upward differences are $y_{kp}-y_{kp+p}=x_k-x_{k+1}$. For $\varepsilon>0$,
\[
\delta\bigl(\{j\in\N: y_j-y_{j+p}\geq\varepsilon\}\bigr)=\frac{1}{p}\,\delta\bigl(\{k\in\N: x_k-x_{k+1}\geq\varepsilon\}\bigr)=0.
\]
Since $f\in\SUC_p(E)$, the sequence $(f(\mathbf{y}))=(\underbrace{f(x_1),\ldots,f(x_1)}_{p},\underbrace{f(x_2),\ldots,f(x_2)}_{p},\ldots)$ is statistically $p$-upward quasi-Cauchy. Extracting the $p$-step differences at boundary positions gives
\[
\delta\bigl(\{k\in\N: f(x_k)-f(x_{k+1})\geq\varepsilon\}\bigr)=0
\]
for every $\varepsilon>0$. Hence $(f(x_n))\in\Delta_S^{+1}$, and $f\in\SUC_1(E)$.
\end{proof}

\begin{corollary}\label{cor:chain}
For all positive integers $p$ and $q$ with $p\mid q$, we have $\SUC_q(E)\subseteq\SUC_p(E)$. In particular,
\[
\cdots\subseteq\SUC_6(E)\subseteq\SUC_3(E)\subseteq\SUC_1(E)\quad\text{and}\quad\cdots\subseteq\SUC_8(E)\subseteq\SUC_4(E)\subseteq\SUC_2(E)\subseteq\SUC_1(E).
\]
\end{corollary}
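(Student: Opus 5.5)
The corollary asks for $\SUC_q(E)\subseteq\SUC_p(E)$ whenever $p\mid q$. I would prove it directly, generalizing the block-repetition argument of Theorem~\ref{thm:p_hierarchy} rather than trying to chain that theorem. Write $q=mp$ with $m\in\N$, fix $f\in\SUC_q(E)$, and take $(x_n)\in\Delta_S^{+p}$. The task is to show $(f(x_n))\in\Delta_S^{+p}$.

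The idea is to repeat blocks of length $p$ instead of single terms. Cut $(x_n)$ into consecutive blocks $B_j=(x_{(j-1)p+1},\ldots,x_{jp})$, and form
\[
\mathbf{y}=(B_1,\ldots,B_1,B_2,\ldots,B_2,\ldots),
\]
where each block is repeated $m$ times. Each super-block $(B_j,\ldots,B_j)$ then has length $q$.

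\textbf{Step 1: the $q$-step differences of $\mathbf{y}$.} Take the position $i$ in the $s$-th copy of $B_j$ at offset $r$, where $1\le r\le p$. Shifting by $q$ lands at the same copy and the same offset in $B_{j+1}$. So
\[
y_i-y_{i+q}=x_{(j-1)p+r}-x_{jp+r}.
\]
This is exactly the $p$-step difference $x_n-x_{n+p}$ with $n=(j-1)p+r$. Each index $n$ therefore appears exactly $m$ times among the $q$-step differences. Moreover, the positions $i\le mN$ correspond to indices $n\le N$.

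\textbf{Step 2: transfer through $f$.} By Step~1,
\[
\bigl|\{i\le mN: y_i-y_{i+q}\ge\varepsilon\}\bigr| = m\,\bigl|\{n\le N: x_n-x_{n+p}\ge\varepsilon\}\bigr|.
\]
Hence $\mathbf{y}\in\Delta_S^{+q}$; at intermediate lengths the counts change by at most $q$, which does not affect density. Since $f\in\SUC_q(E)$, we get $f(\mathbf{y})\in\Delta_S^{+q}$. The sequence $f(\mathbf{y})$ is the same block repetition built from $(f(x_n))$, so the same counting identity with $f$ applied gives $\delta(\{n: f(x_n)-f(x_{n+p})\ge\varepsilon\})=0$. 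This is the claim.

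\textbf{Main obstacle.} The only delicate point is the index bookkeeping in Step~1: one must check that the shift by $q=mp$ always carries copy $s$ of block $B_j$ to copy $s$ of $B_{j+1}$. This holds because position $i$ and position $i+q$ are separated by exactly one super-block of length $q$. One must also control the density normalization when $N$ is not a multiple of $m$. Once these are settled, the displayed chains follow by taking $q=2p$ or $q=3p$ repeatedly, together with Theorem~\ref{thm:p_hierarchy}.
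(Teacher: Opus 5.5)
Your proof is correct and is essentially the paper's argument. Like the paper, you convert $p$-step differences of $(x_n)$ into $q$-step differences of a repeated sequence. The paper, however, repeats each single term $m$ times, $y_j=x_{\lceil j/m\rceil}$, which gives $y_j-y_{j+q}=x_{\lceil j/m\rceil}-x_{\lceil j/m\rceil+p}$ and makes $\abs{\{j\le mN: y_j-y_{j+q}\ge\varepsilon\}}=m\,\abs{\{n\le N: x_n-x_{n+p}\ge\varepsilon\}}$ exact for every $N$. In your block version this identity is exact only when $p\mid N$ (not $m\mid N$, as you wrote), but your bounded-discrepancy remark covers the other $N$, so the argument goes through.
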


\begin{proof}
If $p\mid q$, write $q=mp$. Let $f\in\SUC_q(E)$ and $(x_n)\in\Delta_S^{+p}$. By $m$-fold repetition and the argument of Theorem~\ref{thm:p_hierarchy} (with $p$ in place of $1$ and $q$ in place of $p$), we obtain $f\in\SUC_p(E)$.
\end{proof}

\subsection{Nowhere density in $C_b$}\label{subsec:baire}

We now establish that $\SUC_p(E)$ is topologically small in the Banach space of bounded continuous functions when $E$ is unbounded.

\begin{theorem}[Nowhere density]\label{thm:nowhere_dense}
Let $E\subseteq\R$ be unbounded above with $\N\subseteq E$. Then $\SUC_p(E)\cap C_b(E)$ is a closed nowhere dense subset of the Banach space $(C_b(E),\|\cdot\|_\infty)$. In particular, $\SUC_p(E)\cap C_b(E)$ is of first category $($meager$)$ in $C_b(E)$.
\end{theorem}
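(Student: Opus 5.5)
Write $F=\SUC_p(E)\cap C_b(E)$. Since $F$ is closed, nowhere density amounts to showing that $F$ has empty interior. Given that, $F$ is meager trivially.

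\emph{Closedness.} Let $f_k\in F$ with $\|f_k-f\|_\infty\to 0$ and $f\in C_b(E)$. Then $f\in\SUC_p(E)$ by Theorem~\ref{thm:uniflimit}, which is an argument valid for arbitrary uniform limits, so $f\in F$.

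\emph{Empty interior.} Fix $f\in F$ and $r>0$. I want $h\in C_b(E)$ with $\|h-f\|_\infty<r$ and $h\notin\SUC_p(E)$. The witness sequence will be $x_n=n\in\N\subseteq E$, which lies in $\Delta_S^{+p}$ because $x_n-x_{n+p}=-p<0$.

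\emph{Choice of perturbation.} Consider $f+\eta g$, where $g(x)=\sin(\pi x/(2p))$ restricted to $E$ and $0<\eta<r$. The function $g$ is bounded and continuous on $E$. Along the integers,
\[
g(n)-g(n+p)=\sin(\pi n/(2p))-\sin(\pi n/(2p)+\pi/2),
\]
and this is periodic in $n$ with period $4p$. Over a full period it takes some value $\geq c>0$ at some residue class; for instance, $n\equiv 2p \pmod{4p}$ gives $0-(-1)=1$. Hence $(g(n))$ violates the upward condition on a set of density $\geq 1/(4p)$.

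\emph{The main obstacle.} $f(n)-f(n+p)$ may partially cancel this violation, and we know only that $\delta\{k: f(k)-f(k+p)\geq\varepsilon\}=0$. This is one-sided information: it bounds $f(k)-f(k+p)$ from above off a null set, not from below. I would handle it with the cone structure (Theorem~\ref{thm:cone}) by contradiction. Suppose $h=f+\eta g\in\SUC_p(E)$ for every small $\eta$. Then the information we have controls $h$, not $f-h$, so a direct subtraction gives nothing.

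\emph{Resolving it via boundedness.} Instead I would exploit that $f$ is bounded. Since $\sum_{j=0}^{m-1}\bigl(f(k+jp)-f(k+(j+1)p)\bigr)=f(k)-f(k+mp)\geq-2\|f\|_\infty$, the increments $f(k)-f(k+p)$ are statistically $\leq o(1)$ yet telescope to something bounded below. This forces their negative parts to be small on average along each progression $k+p\N$. Concretely, for fixed $\varepsilon$, off a density-zero set the increments are $<\varepsilon$, and their Cesàro mean over $k\leq n$ is $O(\|f\|_\infty/n)$. Hence the proportion of $k$ with $f(k)-f(k+p)\leq-\varepsilon'$ is at most about $\varepsilon/\varepsilon'$ plus $o(1)$.

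\emph{Concluding the argument.} Take $\varepsilon'=\eta c/2$ and then $\varepsilon$ small. The set of $k\equiv 2p\pmod{4p}$ where $f(k)-f(k+p)>-\eta c/2$ has density at least $1/(4p)-O(\varepsilon/(\eta c))>0$. On that set, $h(k)-h(k+p)\geq \eta c/2$, so $h\notin\SUC_p(E)$. Finally $\|h-f\|_\infty\leq\eta<r$, which shows the ball around $f$ is not contained in $F$.

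The step I expect to be most delicate is this averaging estimate. It must be justified carefully: off a density-zero exceptional set the increments are bounded above by $\varepsilon$, but on that exceptional set they are bounded only by $2\|f\|_\infty$, so their contribution vanishes in density. I would write out this density bookkeeping explicitly.
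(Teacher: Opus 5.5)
Your proof is correct, and it takes a different route from the paper at the decisive step.

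\textbf{The paper's route.} The paper perturbs by $\varepsilon'\sin(\alpha x)$ with $\alpha/(2\pi)$ irrational. It uses only the boundedness of $f$, via a Ces\`aro bound, to show that $D^c=\{k: f(k)-f(k+p)\geq -c\}$ has positive lower density. It then asserts, via Weyl's theorem, that $(\alpha k)_{k\in D^c}$ is still equidistributed. That assertion is false for general index sets of positive lower density: take the set of $k$ with $\alpha k \bmod 2\pi$ in a fixed half-circle. Because the paper's two steps never use $f\in\SUC_p(E)$, they cannot be valid as written. For the bounded function $f=-\varepsilon'\sin(\alpha x)$ the perturbed function is $g\equiv 0\in\SUC_p(E)$, contrary to their conclusion.

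\textbf{Your route.} You use the hypothesis $f\in\SUC_p(E)$, with $a_k=f(k)-f(k+p)$, together with the telescoping bound $|\sum_{k\le n}a_k|\le 2p\|f\|_\infty$. From these you get that $\{k: a_k\le-\varepsilon'\}$ has upper density at most $\varepsilon/\varepsilon'$ for every $\varepsilon>0$, hence density zero. In other words, for bounded members of $\SUC_p(E)$ the $p$-increments along $\N$ are statistically null in both directions. Once that is known, any bounded perturbation whose $p$-increments along $\N$ are at least a fixed positive constant on a set of positive density works. So your rational-period choice $\sin(\pi x/(2p))$, with the residue class $2p \bmod 4p$, suffices, and Weyl's theorem is unnecessary. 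Your observation is also exactly what repairs the paper's route: it makes $D^c$ a set of density one, and equidistribution does persist along density-one index sets.

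\textbf{For the write-up.} State the Markov step explicitly:
$\frac{1}{n}|\{k\le n: a_k\le-\varepsilon'\}|\le \frac{1}{\varepsilon' n}\sum_{k\le n}a_k^{-}=\frac{1}{\varepsilon' n}\left(\sum_{k\le n}a_k^{+}-\sum_{k\le n}a_k\right)$.
Also say ``lower density'' rather than ``density'' for the final set on which $h(k)-h(k+p)\geq \eta/2$.
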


\begin{proof}
Closedness follows from Theorem~\ref{thm:closed} restricted to $C_b(E)$.

We show that $\SUC_p(E)\cap C_b(E)$ has empty interior. Let $f\in\SUC_p(E)\cap C_b(E)$ and $\varepsilon>0$. Set $M=\|f\|_\infty$ and $\varepsilon'=\varepsilon/2$.

Choose $\alpha>0$ with $\alpha/(2\pi)\notin\mathbb{Q}$ such that $s_p(\alpha):=2|\sin(\alpha p/2)|>0$. Such $\alpha$ exists since the set of $\alpha$ with $\alpha/(2\pi)\in\mathbb{Q}$ or $\sin(\alpha p/2)=0$ is countable. Define $g(x)=f(x)+\varepsilon'\sin(\alpha x)$. Then $g\in C_b(E)$ and $\|f-g\|_\infty=\varepsilon'<\varepsilon$.

Consider the sequence $x_n=n$. Since $x_n-x_{n+p}=-p<0$ for all $n$, the sequence $(x_n)$ is in $\Delta_S^{+p}$. Write
\[
g(n)-g(n+p)=\underbrace{[f(n)-f(n+p)]}_{=:\,a_n}+\;\varepsilon'\underbrace{[\sin(\alpha n)-\sin(\alpha(n+p))]}_{=:\,b_n}.
\]
Set $c=\varepsilon' s_p(\alpha)/4>0$. We claim that $g\notin\SUC_p(E)$ by showing that the set $\{k: g(k)-g(k+p)\geq c\}$ has positive density. The argument proceeds in two steps.

\emph{Step~1 (Ces\`{a}ro bound).} Since $f$ is bounded, the telescoping identity gives
\[
\frac{1}{N}\sum_{k=1}^{N}a_k = \frac{1}{N}\Bigl[\sum_{k=1}^{p}f(k)-\sum_{k=N+1}^{N+p}f(k)\Bigr],
\]
and the right-hand side is bounded by $2pM/N\to 0$. Define $D=\{k\in\N: a_k<-c\}$. Since $a_k\geq -c$ on $D^c$ and $a_k\geq -2M$ on $D$, we have
\[
\frac{1}{N}\sum_{k=1}^{N}a_k\leq \frac{|D^c\cap[1,N]|}{N}\cdot 2M + \frac{|D\cap[1,N]|}{N}\cdot(-c) = 2M - (c+2M)\frac{|D\cap[1,N]|}{N}.
\]
Since the left-hand side tends to $0$, we obtain $\liminf_{N\to\infty}\frac{|D^c\cap[1,N]|}{N}\geq\frac{c}{c+2M}>0$; that is, $D^c$ has positive lower density.

\emph{Step~2 (Equidistribution on $D^c$).} Since $\alpha/(2\pi)$ is irrational, the sequence $(\alpha n)\bmod 2\pi$ is equidistributed on $[0,2\pi)$ by Weyl's theorem. Since $D^c$ has positive lower density, Weyl's criterion applied to the subsequence $(\alpha k)_{k\in D^c}$ shows that this subsequence is also equidistributed modulo $2\pi$ on $D^c$ (this is a standard fact; see, e.g., Kuipers and Niederreiter~\cite{Weyl1916}, or apply Weyl's exponential sum criterion to subsequences of positive lower density). In particular, the set
\[
B=\{k\in D^c: b_k\geq s_p(\alpha)/2\}
\]
satisfies $\delta(B)>0$, since $\{b_k\geq s_p(\alpha)/2\}$ corresponds to an arc of positive length on $[0,2\pi)$. For every $k\in B$, we have $a_k\geq -c$ and $\varepsilon' b_k\geq \varepsilon' s_p(\alpha)/2=2c$, so
\[
g(k)-g(k+p)=a_k+\varepsilon' b_k\geq -c+2c=c>0.
\]
Hence $\delta(\{k: g(k)-g(k+p)\geq c\})\geq\delta(B)>0$, so $(g(x_n))\notin\Delta_S^{+p}$. Since $(x_n)\in\Delta_S^{+p}$ but $(g(x_n))\notin\Delta_S^{+p}$, we conclude $g\notin\SUC_p(E)$.
\end{proof}

\begin{remark}
Theorem~\ref{thm:nowhere_dense} reveals a striking dichotomy: while $\SUC_p(E)\cap C_b(E)$ is topologically closed (Theorem~\ref{thm:closed}), it is simultaneously meager in $C_b(E)$. By the Baire category theorem, the ``generic'' bounded continuous function on an unbounded domain is \emph{not} statistically $p$-upward continuous. This parallels the classical result that the set of nowhere differentiable functions is residual in $C[0,1]$, and provides additional evidence that statistically $p$-upward continuity is a genuinely restrictive condition that selects a thin but structured subclass of continuous functions.
\end{remark}

\section{Inclusion Relations}\label{sec:diagram}

We summarize the strict inclusion relations established in the preceding sections.

\emph{Continuity hierarchy.} The following implications hold, and none is reversible:
\[
\text{Lipschitz} \;\Longrightarrow\; \text{Uniform cont.} \;\Longrightarrow\; \SUC_p\text{-cont.} \;\Longrightarrow\;
\begin{cases}
\text{St.\ ward cont.} \\[2pt]
\text{Statistical cont.}
\end{cases}
\;\Longrightarrow\; \text{Continuity.}
\]
On statistically $p$-upward compact sets (equivalently, below bounded sets), the arrow $\text{Uniform cont.} \Rightarrow \SUC_p\text{-cont.}$ is reversible by Theorem~\ref{thm:uniform}.

\emph{Compactness hierarchy.} Compact (bounded) $\subsetneq$ St.\ $p$-upward compact (bounded below) $\subsetneq$ all subsets of $\R$.

Thus $\SUC_p$-continuity occupies a precise intermediate position: strictly stronger than statistical ward continuity and statistical continuity, yet equivalent to uniform continuity on below bounded domains. This position, combined with the cone structure (Theorem~\ref{thm:cone}), makes the framework particularly suited to the one-sided error control developed in the next section.

\section{Applications and Connections}\label{sec:applications}

In this section, we develop a concrete application of the theoretical framework to one-sided error control in function approximation, and discuss further connections to financial mathematics, signal processing, and population dynamics.

\subsection{One-sided error control in function approximation}\label{subsec:onesided}

In many applied contexts, the classical symmetric approximation error $\|L_n(f)-f\|_\infty$ does not adequately capture the true cost of approximation. When the function $f$ represents a safety-critical quantity---such as drug concentration in pharmacokinetics, load-bearing capacity in structural engineering, or minimum fuel reserve in aerospace---an \emph{underestimate} $L_n(f;t)<f(t)-\varepsilon$ can be dangerous, while an \emph{overestimate} $L_n(f;t)>f(t)+\varepsilon$ may be tolerable or even conservative.

This motivates the following notion, which we formalize using the framework developed in the preceding sections.

\begin{definition}\label{def:onesided_safe}
Let $f\in C(E)$ with $E=[a,b]$, $a\geq 0$, and let $(L_n)$ be a sequence of positive linear operators on $C(E)$. Given a sampling sequence $(t_k)_{k\in\N}$ of points in $E$ and a threshold $\varepsilon>0$, we say that the pair $(L_n, (t_k))$ is \emph{statistically $p$-upward safe for $f$} if
\[
\delta\bigl(\{k\in\N: f(t_k)-L_n(f;t_k)\geq\varepsilon\}\bigr)=0\quad\text{for every }\varepsilon>0.
\]
In other words, the undershoot $f(t_k)-L_n(f;t_k)$ is not statistically frequently large. Note that this is a pointwise condition on the \emph{values} of the approximation error at the sampling sequence, not a condition on the \emph{differences} $u_k-u_{k+p}$. The terminology ``$p$-upward'' reflects the $\SUC_p$-framework within which this notion is applied, rather than a dependence on $p$ in the definition itself.
\end{definition}

The connection to the algebraic structure of $\SUC_p(E)$ is as follows.

\begin{proposition}[Composability of safe approximations]\label{prop:safe_compose}
Let $E=[a,b]$ with $a\geq 0$.
\begin{enumerate}[label=$(\roman*)$,leftmargin=2.5em]
\item If $L_n$ and $M_n$ are both statistically $p$-upward safe for $f$ along $(t_k)$, then $\frac{1}{2}(L_n+M_n)$ is also statistically $p$-upward safe for $f$ along $(t_k)$.
\item If $L_n$ is statistically $p$-upward safe for $f$ and $g:E\to E$ is non-decreasing and uniformly continuous, and if $L_n$ commutes with reparametrization by $g$ $($i.e., $L_n(f\circ g;\cdot)=L_n(f;g(\cdot)))$, then $L_n$ is statistically $p$-upward safe for $f\circ g$ along $(g(t_k))$.
\item In general, the negation $-L_n$ is \emph{not} statistically $p$-upward safe for $-f$. That is, one-sided safety is \emph{not} symmetric under sign reversal.
\end{enumerate}
\end{proposition}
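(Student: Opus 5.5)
Each part reduces to the pointwise definition of safety (Definition~\ref{def:onesided_safe}). Throughout, $n$ is fixed and only the index $k$ varies. Write $u_k=f(t_k)-L_n(f;t_k)$ and $v_k=f(t_k)-M_n(f;t_k)$ for the undershoots.

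For (i), linearity of the operators gives
\[
f(t_k)-\tfrac12(L_n+M_n)(f;t_k)=\tfrac12(u_k+v_k).
\]
If $\tfrac12(u_k+v_k)\geq\varepsilon$, then at least one of $u_k,v_k$ is $\geq\varepsilon$. Hence
\[
\{k:\tfrac12(u_k+v_k)\geq\varepsilon\}\subseteq\{k:u_k\geq\varepsilon\}\cup\{k:v_k\geq\varepsilon\}.
\]
A union of two density-zero sets has density zero, so the averaged operator is safe. This is the same covering as in Proposition~\ref{prop:sum}, combined with positive scaling as in Theorem~\ref{thm:cone}(ii). The argument extends verbatim to any convex combination $\lambda L_n+(1-\lambda)M_n$, which is the cone structure the proposition advertises.

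For (ii), evaluate the safety condition for $f\circ g$ along $(t_k)$. The commutation hypothesis gives
\[
(f\circ g)(t_k)-L_n(f\circ g;t_k)=f(g(t_k))-L_n(f;g(t_k)).
\]
This is exactly the undershoot of $f$ at the point $g(t_k)$. The hypothesis must therefore be read as: $L_n$ is safe for $f$ along the sampling sequence $(g(t_k))$. I would make this reading explicit, since safety is a condition tied to a particular sampling sequence. With that reading the two undershoot sets coincide and the claim is immediate.

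Monotonicity and uniform continuity of $g$ are not needed for this identity. They only guarantee that $f\circ g$ stays in the regime of Theorem~\ref{thm:monotone}: $g\in\SUC_p(E)$, and composition is closed by Proposition~\ref{prop:sumcomp}(ii). I would note this as a remark. The main obstacle in the whole proposition is the interpretive issue here, namely which sampling sequence the safety hypothesis refers to, not any calculation.

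For (iii), a single counterexample suffices. Note that
\[
(-f)(t_k)-(-L_n)(-f;t_k)=-f(t_k)-L_n(f;t_k).
\]
Take $E=[0,1]$, $f\equiv1$, $L_n$ the Bernstein operators~\eqref{eq:bernstein} (which reproduce constants, so $L_n f=f$ and $L_n$ is trivially safe for $f$), and any sampling sequence. The undershoot for the sign-reversed pair is then identically $-2$, which is harmless. This choice of $f$ therefore fails to produce the asymmetry.

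I would instead choose a nonzero undershoot with a definite sign. Take $f(x)=x^2$ and $B_n$, so that $B_n(f;x)=x^2+x(1-x)/n\geq f(x)$. The undershoot $f-B_n f$ is $\leq0$, so $B_n$ is safe for $f$. Passing to $-f$ with $B_n$ gives undershoot
\[
-f-B_n(-f)=-f+B_n f=x(1-x)/n\geq0.
\]
Fix $n=1$ and the sampling sequence $t_k=\tfrac12$. This undershoot equals $\tfrac14$ for every $k$, so the violation set has density one at $\varepsilon=\tfrac14$. This exhibits the failure of symmetry, which is how I read the phrase ``$-L_n$ for $-f$''.
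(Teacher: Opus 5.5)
\textbf{Verdict.} Your proof is correct, given the reinterpretations the paper also needs. Parts (i) and (ii) follow the paper's route, while part (iii) takes a different and stronger one.

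\textbf{Parts (i) and (ii).} These are the paper's argument. In (i) you use the same covering $\{k:\tfrac12(u_k+v_k)\geq\varepsilon\}\subseteq\{k:u_k\geq\varepsilon\}\cup\{k:v_k\geq\varepsilon\}$. In (ii) you make the same one-line use of the commutation identity. The paper likewise has to assume safety of $f$ along $(g(t_k))$, and it in fact concludes safety of $f\circ g$ along $(t_k)$.

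\textbf{Part (iii).} This is where you genuinely differ. The paper cites Theorem~\ref{thm:notvs} for $f(t)=-t$ on $[0,1]$. It then only asserts that $\delta(\{k:f(t_k)-L_n(f;t_k)\geq\varepsilon\})=0$ does not force $\delta(\{k:L_n(f;t_k)-f(t_k)\geq\varepsilon\})=0$. The citation does not apply, for two reasons:
\begin{itemize}
\item that theorem needs $E$ to contain a sequence unbounded above;
\item $\SUC_p$-membership is not the pointwise condition of Definition~\ref{def:onesided_safe} anyway.
\end{itemize}
So the paper's (iii) rests on an unillustrated assertion. Your example is a concrete witness for exactly that assertion: $f(x)=x^2$, $B_1f(x)=x$, $t_k\equiv\tfrac12$, with undershoot $-\tfrac14$ for $f$ but $+\tfrac14$ for $-f$. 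Your route therefore actually proves the ``in general'' claim.

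\textbf{Points to tidy.}
\begin{itemize}
\item In (iii), your first display $(-f)(t_k)-(-L_n)(-f;t_k)=-f(t_k)-L_n(f;t_k)$ is the operator-literal reading. Under it your $x^2$ example gives $-\tfrac14-\tfrac12=-\tfrac34$, which proves nothing. The computation you actually use is $-f-B_n(-f)=B_nf-f$, i.e.\ ``target $-f$ approximated by $-B_nf=B_n(-f)$''. That is the reading the paper intends, so state it and drop the first display. (Under the literal reading, $f\equiv-1$ would work instead: undershoot $0$ for $f$, $2$ for the negated pair.)
\item In (ii), say explicitly that the conclusion is along $(t_k)$, not along $(g(t_k))$ as the statement says. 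If the hypothesis is taken along $(t_k)$, the stated version can fail. Take $L_nh=\tfrac12h$, $f(x)=x$, $t_k\equiv0$, $g(x)=\tfrac{1+x}{2}$ on $[0,1]$: $f$ is safe along $(t_k)$, but the undershoot of $f\circ g$ at $g(t_k)=\tfrac12$ is $\tfrac38$.
\item Your side remark in (ii) would also need $f\in\SUC_p(g(E))$ for Proposition~\ref{prop:sumcomp}(ii) to give $f\circ g\in\SUC_p(E)$.
\end{itemize}
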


\begin{proof}
(i) Write $U(t_k)=f(t_k)-\frac{1}{2}(L_n+M_n)(f;t_k)=\frac{1}{2}[f(t_k)-L_n(f;t_k)]+\frac{1}{2}[f(t_k)-M_n(f;t_k)]$. If $U(t_k)\geq\varepsilon$, then at least one of the two terms exceeds $\varepsilon/2$. Hence
\[
\{k: U(t_k)\geq\varepsilon\}\subseteq\{k: f(t_k)-L_n(f;t_k)\geq\varepsilon\}\cup\{k: f(t_k)-M_n(f;t_k)\geq\varepsilon\}.
\]
Both sets on the right have density zero by hypothesis, so the left side does as well.

(ii) Suppose $L_n$ is $p$-upward safe for $f$ along $(t_k)$ and $g$ is non-decreasing and uniformly continuous. By the commutation hypothesis, $L_n(f\circ g;t_k)=L_n(f;g(t_k))$. Since $L_n$ is $p$-upward safe for $f$, $\delta(\{k: f(s_k)-L_n(f;s_k)\geq\varepsilon\})=0$ for every sampling sequence $(s_k)$ along which $L_n$ is safe, in particular for $s_k=g(t_k)$. Hence $\delta(\{k: f(g(t_k))-L_n(f;g(t_k))\geq\varepsilon\})=\delta(\{k: (f\circ g)(t_k)-L_n(f\circ g;t_k)\geq\varepsilon\})=0$.

(iii) The function $f(t)=-t$ belongs to $C[0,1]$ but not to $\SUC_p([0,1])$ by Theorem~\ref{thm:notvs}. More directly, the one-sided safety notion is inherently asymmetric because the density condition $\delta(\{k: f(t_k)-L_n(f;t_k)\geq\varepsilon\})=0$ does not imply $\delta(\{k: L_n(f;t_k)-f(t_k)\geq\varepsilon\})=0$.
\end{proof}

Proposition~\ref{prop:safe_compose} has a clear practical interpretation: (i) states that averaging two individually safe approximation schemes preserves safety, which is relevant in ensemble methods; (ii) ensures that monotone reparametrizations (e.g., time rescaling in pharmacokinetics) do not destroy safety; and (iii) warns that switching from a ``no underdose'' guarantee to a ``no overdose'' guarantee requires a fundamentally different analysis.

We now present a concrete numerical illustration.

\begin{example}[Drug concentration curve]\label{ex:pharma}
Consider a simplified pharmacokinetic model where the plasma concentration of a drug at normalized time $t\in[0,1]$ is given by the concave function
\[
f(t)=4t(1-t),
\]
which rises from $f(0)=0$, peaks at $f(1/2)=1$, and returns to $f(1)=0$. The \emph{minimum effective concentration} (MEC) defines a safety threshold; any approximation that underestimates $f(t)$ by more than $\varepsilon$ at time $t$ places the patient below the therapeutic window, while a moderate overestimate is conservative and clinically acceptable.

We approximate $f$ by Bernstein operators $B_n$ (see~\eqref{eq:bernstein}). Since $f$ is concave on $[0,1]$, Jensen's inequality yields $B_n(f;t)\leq f(t)$ for all $t\in[0,1]$ and all $n$. Thus, the approximation error is \emph{entirely one-sided}: $B_n$ systematically underestimates the true concentration, and the symmetric error $\|B_n(f)-f\|_\infty$ fails to distinguish this from a balanced error.

For this function, $f''(t)=-8$, and Voronovskaja's theorem gives the asymptotic undershoot:
\[
U_n(t):=f(t)-B_n(f;t)\sim\frac{-f''(t)\,t(1-t)}{2n}=\frac{4t(1-t)}{n},
\]
with equality for all $n$ since $f$ is a polynomial of degree~$2$. The maximum undershoot occurs at $t=1/2$:
\begin{equation}\label{eq:maxunder}
\max_{t\in[0,1]}U_n(t)=\frac{1}{n}.
\end{equation}

Given a safety threshold $\varepsilon>0$, the proportion of the $N+1$ uniformly spaced sampling points $t_k=k/N$ ($k=0,1,\ldots,N$) at which the undershoot exceeds $\varepsilon$ is
\[
P_n(\varepsilon):=\frac{1}{N+1}\bigl|\{k:U_n(t_k)\geq\varepsilon\}\bigr|=\frac{1}{N+1}\bigl|\bigl\{k:t_k(1-t_k)\geq\tfrac{n\varepsilon}{4}\bigr\}\bigr|.
\]
Table~\ref{tab:pharma} reports these values for $\varepsilon=0.02$ and $N=1000$.

\begin{table}[ht]
\centering
\caption{Bernstein approximation of the concentration curve $f(t)=4t(1-t)$. Undershoot $U_n=f-B_n(f)$ at $N+1=1001$ uniformly spaced points, threshold $\varepsilon=0.02$.}\label{tab:pharma}
\smallskip
\begin{tabular}{rcccc}
\toprule
$n$ & $\max_t U_n(t)$ & $P_n(0.02)$ & $\|B_n(f)-f\|_\infty$ & Safety status \\
\midrule
10 & 0.1000 & 0.894 & 0.1000 & Unsafe \\
25 & 0.0400 & 0.706 & 0.0400 & Unsafe \\
50 & 0.0200 & 0.001 & 0.0200 & Marginal \\
100 & 0.0100 & 0.000 & 0.0100 & $p$-upward safe \\
200 & 0.0050 & 0.000 & 0.0050 & $p$-upward safe \\
\bottomrule
\end{tabular}
\end{table}

Several features are noteworthy:
\begin{enumerate}[label=$(\roman*)$,leftmargin=2.5em]
\item The symmetric error $\|B_n(f)-f\|_\infty$ and the maximum undershoot $\max_t U_n(t)$ are \emph{identical} for this concave function, confirming that the classical error norm does not distinguish the clinically critical undershoot from the benign overshoot.
\item The transition from ``Unsafe'' to ``$p$-upward safe'' occurs sharply: for $n<1/({\varepsilon})=50$, a positive proportion of sampling points exceed the threshold; for $n\geq 1/{\varepsilon}$, the condition $\delta(\{k:U_n(t_k)\geq\varepsilon\})=0$ of Definition~\ref{def:onesided_safe} is satisfied.
\item By Proposition~\ref{prop:safe_compose}(i), averaging two independently computed Bernstein approximations $\frac{1}{2}(B_m+B_n)$ remains $p$-upward safe whenever both $B_m$ and $B_n$ are individually safe. This models combining two dosimetric models in clinical practice.
\item By Proposition~\ref{prop:safe_compose}(iii), reversing the sign to control \emph{overdose} instead of underdose requires a separate analysis---the $\SUC_p$ framework does not transfer automatically, reflecting the genuine asymmetry of the clinical problem.
\end{enumerate}
\end{example}

\begin{remark}[Generalization]\label{rem:generalize}
The framework extends to any setting where underestimation is costlier than overestimation. The cone structure (Theorem~\ref{thm:cone}) ensures that combining safe approximations preserves safety, while the nowhere density result (Theorem~\ref{thm:nowhere_dense}) implies that safety must be deliberately engineered.
\end{remark}

\section{Open Problems}\label{sec:open}

We formulate two open problems that arise naturally from the present work.

\begin{openproblem}[Tauberian theorem]
Find a minimal Tauberian condition $(\mathcal{T})$ such that every statistically $p$-upward quasi-Cauchy sequence satisfying $(\mathcal{T})$ is convergent. The classical Tauberian condition for statistical convergence is the ``slow decrease'' condition $\liminf_{n\to\infty}(x_{n+1}-x_n)\geq 0$ (cf.~\cite{Fridy1985}). Since the statistically $p$-upward quasi-Cauchy condition already imposes a one-sided restriction, can one obtain a Tauberian theorem with a weaker additional condition than slow decrease?
\end{openproblem}

\begin{openproblem}[Ideal convergence generalization]
Let $I$ be an admissible ideal of $\N$. Define $(x_n)$ to be \emph{$I$-$p$-upward quasi-Cauchy} if $\{k\in\N: x_k-x_{k+p}\geq\varepsilon\}\in I$ for every $\varepsilon>0$. The present paper treats the case $I=I_\delta$ (the ideal of sets with density zero). Investigate the corresponding compactness and continuity theory for other important ideals, including the summable ideal $I_s=\{A\subseteq\N:\sum_{n\in A}\frac{1}{n}<\infty\}$, lacunary ideals arising from lacunary sequences $\theta=(k_r)$, and the Erd\H{o}s--Ulam ideal. In particular, does the compactness characterization (Theorem~\ref{thm:compact}) remain valid for all density ideals?
\end{openproblem}

\section{Conclusion}\label{sec:conc}

In this paper, we introduced statistically $p$-upward quasi-Cauchy sequences and developed the corresponding compactness and continuity theory. The main contributions are: (1)~statistically $p$-upward compactness is equivalent to lower boundedness (Theorem~\ref{thm:compact}); (2)~the function space $\SUC_p(E)$ is a closed convex cone that is \emph{not} a vector subspace (Theorems~\ref{thm:closed},~\ref{thm:cone}), distinguishing it from all previously studied sequential continuity spaces; (3)~Weyl's equidistribution theorem yields $\sin x\notin\SUC_p(\R)$ (Theorem~\ref{thm:sine}), a step-parameter hierarchy (Theorem~\ref{thm:p_hierarchy}), and the nowhere density of $\SUC_p(E)\cap C_b(E)$ in $C_b(E)$ (Theorem~\ref{thm:nowhere_dense}); and (4)~a one-sided error control framework for function approximation, illustrated by Bernstein operators on a pharmacokinetic model (Example~\ref{ex:pharma}). Two open problems concerning Tauberian conditions and ideal convergence generalizations were formulated.

\subsection*{Declarations}
\textbf{Funding:} The author declares that no funds, grants, or other support were received during the preparation of this manuscript.

\noindent\textbf{Competing Interests:} The author has no relevant financial or non-financial interests to disclose.

\noindent\textbf{Data Availability:} Not applicable.


\end{document}